\documentclass[12pt,a4paper]{article}
\usepackage[T1]{fontenc}
\usepackage[utf8]{inputenc}
\usepackage[british]{babel}
\usepackage{amsmath,amssymb,amsthm}
\usepackage{tensor}
\usepackage{xcolor}
\usepackage[all]{xy}
\usepackage[noadjust]{cite}
\usepackage{hyperref}

\let\conj=\induced

\newcommand{\N}{\mathbb{N}} 
\newcommand{\Z}{\mathbb{Z}}

\DeclareMathOperator{\centre}{Z}
\DeclareMathOperator{\Cent}{C}
\DeclareMathOperator{\Norm}{N}
\DeclareMathOperator{\Soc}{Soc}

\DeclareMathOperator{\Aut}{Aut}
\DeclareMathOperator{\Hol}{Hol}

\DeclareMathOperator{\Fix}{Fix}

\DeclareMathOperator{\id}{id}

\newtheorem{teo}{Theorem}[section]
\newtheorem{prop}[teo]{Proposition}
\newtheorem{lemma}[teo]{Lemma}

\newtheorem{athm}{Theorem}

\theoremstyle{definition}

\theoremstyle{remark}

\title{Analogues of Gr\"un's lemma and Baer's theorem for skew left braces}
\author{A. Ballester-Bolinches\thanks{Departament de Matem\`atiques, Universitat de Val\`encia, Av.\ Vicent Andr\'es Estell\'es, 19, 46100 Burjassot, Val\`encia, Spain, \texttt{\href{mailto:Adolfo.Ballester@uv.es}{Adolfo.Ballester@uv.es}}, \texttt{\href{mailto:Ramon.Esteban@uv.es}{Ramon.Esteban@uv.es}}, \texttt{\href{mailto:Pedro.A.Perez@uv.es}{Pedro.A.Perez@uv.es}}, ORCID \href{https://orcid.org/0000-0002-2051-9075}{0000-0002-2051-9075}, \href{https://orcid.org/0000-0002-2321-8139}{0000-0002-2321-8139},  \href{https://orcid.org/0009-0009-7082-9002}{0009-0009-7082-9002}}\and R. Esteban-Romero\addtocounter{footnote}{-1}\footnotemark{} \and L. A. Kurdachenko\thanks{Department of Algebra and Geometry, Oles Honchar Dnipro National University,
Dnipro 49010, Ukraine, \texttt{\href{mailto:lkurdachenko@gmail.com}{lkurdachenko@gmail.com}}, ORCID \href{https://orcid.org/0000-0002-6368-7319}{0000-0002-6368-7319}.}\ \thanks{Part of the research of this author has been carried out in the Departament de Matemàtiques, Universitat de València; Av.\ Vicent Andr\'es Estell\'es, 19, 46100 Burjassot, València, Spain.} \and P. P\'erez-Altarriba\addtocounter{footnote}{-3}\footnotemark{}}

\begin{document}
\maketitle
\begin{abstract}
  We prove in this paper some analogues of the well-known group-theoretical Gr\"un's lemma, stating that in a perfect group the first and the second centre coincide, and Baer's theorem, stating that if the quotient by the $n$th centre of a group is finite, then so is the $(n+1)$th term of the lower central series, in the scope of infinite slew left braces. These results represent significant improvements over previous work. The trifactorised group associated with a skew left brace will be crucial for our proofs.

  \emph{Keywords:} skew left brace; trifactorised group; Baer's theorem; Gr\"un's lemma; derived ideal

  \emph{Mathematics subject classification:} 16T25, 
  81R50, 
  20D40, 
  20F14 
\end{abstract}
\section{Introduction}
In group theory, the boundaries of nilpotency and central stability in group extensions are better understood by means of Gr\"un's lemma and Baer's theorem. Gr\"un's lemma \cite[Satz~4]{Grun36} establishes that perfect groups exhibit an immediate central stabilisation (namely, $\operatorname{Z}_2(G)=\operatorname{Z}(G)$), while Baer's theorem \cite[\S6, Endlichkeitssatz]{Baer52-Endlichkeitskriterien} exhibits a dual perspective as the fact that a quotient $G/{\operatorname{Z}_n(G)}$ by a term of the upper central series of a group is finite forces the finiteness of the corresponding term of the lower central series $\gamma_{n+1}(G)$. Baer's theorem for $n=1$ is also known as Schur's theorem. Taken together, Gr\"un's lemma and Baer's theorem show two complementary aspects of the relationship between centrality and commutator behaviour: the first one emphasises structural collapse in the presence of maximal non-abelianity, while the second one shows how finiteness conditions imposed on the quotients by the terms of the ascending central series propagate into the descending central series. They are essential tools in the study of nilpotency, central extensions, and generalised finiteness conditions in groups.


An interesting algebraic structure that can be regarded as an extension of groups and radical rings and that models combinatorial solutions of the Yang-Baxter equation of mathematical physics \cite{Drinfeld92,Baxter73,Yang67} is the one of skew left brace introduced in \cite{GuarnieriVendramin17,Rump07}. Skew left braces provide a comprehensive algebraic framework to constract analyse non-degenerate solutions of the Yang-Baxter equation.
A \emph{skew left brace} (or, in short, a \emph{skew brace}) $(B, {+}, {\cdot})$ consists of a set $B$ with two binary operations $+$ and~$\cdot$ such that $(B, {+})$ and $(B, {\cdot})$ are groups linked by a distributive-like law $a(b+c)=ab-a+ac$ for $a$, $b$, $c\in B$. When the operations on $B$ are clear from the context, we will refer to a skew brace as $B$ instead of $(B, {+}, {\cdot})$. It is abundantly clear that the more we understand skew  braces, the more we understand combinatorial solutions of the Yang-Baxter equation.

This paper springs out from reading the papers \cite{Tsang26-jmathsocjapan} of Tsang and \cite{JespersKubatVanAntwerpenVendramin21} of Jespers, Kubat, Van Antwerpen, and Vendramin. The main result of \cite{Tsang26-jmathsocjapan} can be regarded as a generalisation of Gr\"un's lemma to two-sided skew braces, while \cite[Theorem~5.4]{JespersKubatVanAntwerpenVendramin21} can be regarded as a skew brace version of Schur's theorem.
In this paper, we present significant improvements of the above results. We expect, as in the case of groups, that these results will help us to understand the relation between centrality and commutator behaviour in skew braces and that can be essential tools in the study of nilpotency, central extensions, and generalised finiteness conditions in skew braces. The trifactorised groups associated with skew braces studied in \cite{BallesterEsteban22,BallesterEstebanPerezAPerezC26-commmathstat-categoriesskewleftbraces} are an essential tool in the proofs of our results. This confirms the strength of the techniques based on trifactorised groups to understand the structure of skew braces.

\section{Terminology and statement of results}

In order to state the results for skew braces, we must introduce some terminology and notation. In this section, $(B,+,\cdot)$, or simply $B$, will denote a skew brace. 

An important operation in $B$ that relates both group operations is the \emph{star} operation defined by $a*b=-a+ab-b$ for $a$, $b\in B$. Indeed, both group operations coincide if, and only if, $a*b=0$ for all $a$, $b\in B$; in this case, $B$ is said to be a \emph{trivial skew brace}. Groups can be identified with trivial skew braces. Trivial skew braces with abelian additive group are called \emph{abelian skew braces}. The star operation can be regarded as a skew brace analogue of the commutator in group theory and is used to define nilpotency properties in skew braces.

Now we recall the definitions of  substructures of~$B$.
A \emph{subbrace} of $B$ is a subgroup of $(B, {+})$ which is also a subgroup of $(B, {\cdot})$.

Given two subsets $X$, $Y \subseteq B$, we write $\langle X \rangle_+$,  $[X,Y]_+$ for the subgroup generated by $X$ and the commutator of $X$ and $Y$ in $(B,+)$, respectively, and we write $\langle X\rangle_\bullet$, $[X,Y]_\bullet$ for the subgroup generated by $X$ and the commutator of $X$ and $Y$ in $(B,\cdot)$, respectively. The subgroup of $(B, +)$ generated by the set $\{x \ast y \mid x \in X, y \in Y\}$ is denoted by $X \ast Y$. 
 
A subbrace $L$ of $B$ is said to be a \emph{left ideal} (respectively, \emph{right ideal}) of $B$ if $B \ast L \subseteq L$ (respectively, $L \ast B \subseteq L$). A \emph{strong left ideal} $I$ of $B$ is a left ideal such that $(I,{+})$ is a normal subgroup of $(B,{+})$.  An \emph{ideal} of $B$ is a strong left ideal which is a right ideal of $B$. Note that every subgroup of a trivial skew brace is a left and right ideal, and $B\ast B$ is an ideal of $B$.

We define the \emph{left series} of $B$ by means of $B^1=B$ and $B^n=B\ast B^{n-1}$ for $n\geq 2$. It is well known that the terms of the left series are left ideals.

We know that $(B,{\cdot})$ acts on $(B,{+})$ via the \emph{lambda map} given by \[\lambda\colon (B,{\cdot})\longrightarrow\Aut(B,{+}),\quad\lambda(a)(b)=\lambda_a(b)=-a+ab,\quad a, b\in A.\] Furthermore, this action gives us some important substructures as the \emph{kernel of lambda}, the set of \emph{fixed points} of $B$, the \emph{socle} of $B$ and the \emph{centre} of $B$:
\begin{align*}
\ker\lambda&=\{a\in B\mid\lambda_a(b)=b,\;\forall b\in B\},\\
\Fix(B)&=\{b\in B\mid\lambda_a(b)=b,\;\forall a\in B\},\\
\Soc(B)&=\ker\lambda\cap \centre(B,{+}),\\
\centre(B)&=\Soc(B)\cap \centre(B,{\cdot})=\Soc(B)\cap \Fix(B).
\end{align*}
It is rather easy to prove that $\Fix(B)$ is a left ideal of $B$, while $\Soc(B)$ and $\centre(B)$ are ideals of $B$ (see \cite[Lemma~2.5]{GuarnieriVendramin17} and \cite[remark after Definition~7]{CatinoColazzoStefanelli19}, respectively). The kernel of lambda is a subbrace of~$B$, but in general it is not a left ideal of~$B$.

Given $X\subseteq B$, we define the \emph{ideal generated} by $X$, denoted by $\left\langle X\right\rangle^B$, as the smallest ideal of $B$ that contains $X$.
The \emph{commutator} of two ideals $I$ and $J$ is defined in \cite{BournFacchiniPompili23} as
\[[I,J]^B=\left\langle [I,J]_+\cup [I,J]_\bullet\cup\{ij-(i+j)\;|\;i\in I,j\in J\}\right\rangle^B.\]
The \emph{derived ideal} $[B,B]^B$ of $B$ is an important ideal of $B$ that plays a key role in the study of nilpotency and solubility of skew braces (see \cite{BallesterEstebanFerraraPerezCTrombetti25-pacificjm-cent-nilp, BallesterEstebanJimenezPerezC24-solubleskewbraces}). The results of these papers show that the derived ideal is the true analogue of the group commutator. In fact, $[B,B]^B$ is the smallest ideal of $B$ with abelian quotient, and $[B,B]^B=B\ast B+[B,B]_+=(B\ast B)[B,B]_\bullet$ (see \cite[Proposition~3.3]{BallesterEstebanPerezA26-jalg-prod-ab}). We say that a skew brace $B$ is \emph{perfect} if $B=[B,B]^B$.

In \cite{BonattoJedlicka23} the authors define skew brace analogues of the upper and lower central series of groups. They define the \emph{upper central series} of $B$ as
\[0=\centre_0(B)\subseteq \centre_1(B)\subseteq\cdots\subseteq \centre_n(B)\subseteq\cdots,\]
where
\[\centre_{n+1}(B)/{\centre_{n}(B)}=\centre(B/{\centre_{n}(B)}),\quad n\in\mathbb{N}\cup\{0\},\]
and they define the \emph{lower central series} of $B$ as
\[B=\Gamma_1(B)\supseteq \Gamma_2(B)\supseteq\cdots\supseteq\Gamma_n(B)\supseteq\cdots,\]
where
\[\Gamma_{n+1}(B)=[\Gamma_n(B),B]^B=\langle\Gamma_n(B)\ast B+B\ast \Gamma_n(B)+[\Gamma_n(B),B]_+\rangle^B,\quad n\in\mathbb{N}.\]
The second equality follows from \cite[Theorem~3.3]{BallesterEstebanFerraraPerezCTrombetti25-pacificjm-cent-nilp}.

The skew brace $B$ is said to be \emph{centrally nilpotent} (see~\cite{BonattoJedlicka23}) if there exists an $n \in \mathbb{N}$ such that $\centre_n(B) = B$ (equivalently, $\Gamma_{n+1}(B)=0$, by \cite[Proposition~4.2]{BallesterEstebanFerraraPerezCTrombetti25-pacificjm-cent-nilp}). The smallest such $n$ is called the \emph{central nilpotency class} of~$B$.

A two-sided skew brace is a skew-brace in which a right version $(b+c)a=ba-a+ca$ for all $a$, $b$, $c\in B$ of the left distributive-like property holds. This stronger condition impose restrictions on the underlying algebraic structures not present in general in general skew braces and provides a natural setting to study the interaction between the behaviours of the multiplicative and the additive group. For instance, two-sided skew braces satisfy the nice property that the multiplicative conjugation by $a\in B$, which is an automorphism of $(B, {\cdot})$, corresponds also to an automorphism of $(B, {+})$, that is, for every $a$, $x$, $y\in B$, we have that $a(x+y)a^{-1}=axa^{-1}+aya^{-1}$ (see \cite[Lemma~4.1]{Nasybullov19}). Moreover, two-sided skew braces with abelian additive group correspond exactly to radical rings.

The main result of \cite{Tsang26-jmathsocjapan} is the following one (see Theorem~1.6 and Corollaries~1.8 and~2.6 of that paper).
\begin{teo}\label{teo-grun-tsang}
Let $B$ be a skew brace such that $B\ast B=B$, then $\centre_2(B)=\centre(B)$ if and only if, the multiplicative conjugation by every element of $\centre_2(B)$ corresponds to an automorphism of $(B,+)$. 

In particular, if $B$ is a two-sided skew brace such that $B\ast B=B$, then $\centre(B/{\centre(B)})=1$.
\end{teo}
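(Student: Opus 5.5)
The \emph{only if} direction is immediate. If $\centre_2(B)=\centre(B)$, then every $z\in\centre_2(B)$ lies in $\centre(B,{\cdot})$, so conjugation by $z$ is the identity, which is certainly an automorphism of $(B,{+})$. The \emph{in particular} part should also be immediate once the equivalence is proved. In a two-sided skew brace, conjugation by any element is additive by \cite[Lemma~4.1]{Nasybullov19}, so $\centre_2(B)=\centre(B)$, and hence $\centre(B/{\centre(B)})=\centre_2(B)/{\centre(B)}$ is trivial. The real work is the \emph{if} direction.

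For that direction, I fix $z\in\centre_2(B)$ and write $Z=\centre(B)$. By definition of $\centre_2(B)$, for every $b\in B$ the four elements
\[z*b,\quad b*z,\quad [z,b]_+,\quad [z,b]_\bullet\]
lie in $Z$. It suffices to show that all four are trivial, because then $z\in\ker\lambda\cap\Fix(B)\cap\centre(B,{+})\cap\centre(B,{\cdot})=Z$. I regard $Z$ as an abelian skew brace, in which sum and product coincide and elements of $Z$ are fixed by every $\lambda_b$. The Grün-type strategy is to show that each of the four maps
\[b\mapsto z*b,\quad b\mapsto b*z,\quad b\mapsto [z,b]_+,\quad b\mapsto[z,b]_\bullet\]
is a homomorphism from $B$ into $Z$ with respect to \emph{both} operations, that is, a skew brace homomorphism into an abelian brace. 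Such a map kills every star product $x*y=-x+xy-y$, because its image is $-f(x)+f(x)+f(y)-f(y)=0$. It therefore vanishes on $B*B=B$.

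Half of the homomorphism properties follow from short computations using only centrality of $Z$.
\begin{itemize}
\item For $f(b)=z*b$, write $zb=z+b+z*b$ with $z*b$ additively central. Then
\[z*(b+c)=-z+zb-z+zc-b-c=z*b+z*c.\]
\item For $g(b)=b*z$, use $\lambda_{bc}=\lambda_b\lambda_c$ together with $\lambda_c(z)=z+c*z$ and $\lambda_b$ fixing $Z$. This gives $(bc)*z=b*z+c*z$.
\item The standard group-theoretic identities for commutators with values in the centre show that $b\mapsto[z,b]_+$ is additive and $b\mapsto[z,b]_\bullet$ is multiplicative.
\end{itemize}

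The main obstacle is the other half: multiplicativity of $f$ and of $b\mapsto[z,b]_+$, and additivity of $g$ and of $b\mapsto[z,b]_\bullet$. This is exactly where the hypothesis that conjugation by $z$ is additive must enter. My first attempt is to write $zbz^{-1}=b\,[z,b]_\bullet^{-1}$-type expressions. Additivity of conjugation then yields that $b\mapsto zbz^{-1}-b$ is additive, so $[z,\cdot]_\bullet$ is additive modulo the central corrections. I then relate $z*(bc)$ to $z*b$, $z*c$ by expanding $zbc=(zbz^{-1})zc$ and using the distributive law $a(x+y)=ax-a+ay$. The relations linking the four maps (for instance $z*b-b*z=[z,b]_\bullet$-type identities modulo $[z,b]_+$) should let each map's missing homomorphism property be deduced from the others. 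Once all four maps are bi-homomorphisms, they vanish on $B*B=B$, and so $z\in Z$.
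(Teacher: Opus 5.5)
Your route is genuinely different from the paper's and it works. However, the step you yourself call the main obstacle is only gestured at, so as written the proof is incomplete. It does close along exactly the lines you suggest.

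A small slip first: $-(b+c)=-c-b$, so your display for $z*(b+c)$ should end in $-c-b$. With $-b-c$ the equality fails unless $b$ and $c$ commute additively.

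Now the missing step. Put $Z=\centre(B)$ and, for $z\in\centre_2(B)$, define
\[
f(b)=z*b,\quad g(b)=b*z,\quad h(b)=zbz^{-1}b^{-1},\quad k(b)=-z+b+z-b .
\]
All four values lie in $Z$; the map $k$ plays the role of your $[z,b]_+$. Every $w\in Z$ satisfies $wy=yw=w+y$. Hence $zbz^{-1}=h(b)+b$ exactly, so the hypothesis is precisely additivity of $h$, with no correction terms. Next, $zbc=h(b)\,(bzc)=h(b)+b+\lambda_b(zc)$. Use $zc=z+c+f(c)$, $\lambda_b(z)=z+g(b)$, $\lambda_b(f(c))=f(c)$ and $bc=b+\lambda_b(c)$. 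This gives
\[
z*(bc)=-z+h(b)+b+z+g(b)+\lambda_b(c)+f(c)-\lambda_b(c)-b=h(b)+g(b)+k(b)+f(c).
\]
Taking $c=0$ (note $f(0)=0$) gives your linking identity $f=h+g+k$. It then follows that $f(bc)=f(b)+f(c)$, without using the hypothesis. So $f$ is a homomorphism into the abelian brace $Z$ and vanishes on $B*B=B$. By the hypothesis, $h$ vanishes too. Then $g=-k$ is multiplicative (as $g$ is) and additive (as $k$ is), so it vanishes as well, and $z\in\centre(B)$. Your direct treatment of the ``only if'' direction, where conjugation by $z$ is the identity, is fine and simpler than going through Tsang's equivalence.

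The paper does not prove this statement directly. It obtains it from Theorem~\ref{teo-grun}, since $B*B=B$ implies that $B$ is perfect. It passes to the large trifactorised group $G$ and uses $T_2^Z\le\centre_2(G)$ and $[\centre_2(G),G']=1$. This gives $\centre_2(B)*[B,B]^B=0$, $[\centre_2(B),[B,B]^B]_+=0$ and $[B,B]_\bullet*\centre_2(B)=0$ for every skew brace. Perfectness then yields $\centre_2(B)\subseteq\Soc(B)$. The decomposition $B=(B*B)[B,B]_\bullet$ reduces $\centre_2(B)\subseteq\Fix(B)$ to $(B*B)*\centre_2(B)=0$, which is identified with the conjugation condition by citing Tsang.

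Your argument is elementary and self-contained. It needs neither trifactorised groups nor Tsang's equivalence, because it uses the hypothesis directly through $h$. It also recovers the paper's observation that the $\Soc(B)$ part is unconditional. Moreover, a brace homomorphism into an abelian brace kills $B*B+[B,B]_+=[B,B]^B$, so your method proves Theorem~\ref{teo-grun} for perfect braces verbatim. What the paper's route buys instead is a uniform mechanism, the comparison of $\centre_n(B)$ and $\Gamma_n(B)$ with $\centre_n(G)$ and $\gamma_n(G)$, which it reuses for the Baer-type results.
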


Observe that Grün's lemma may be recovered from Theorem~\ref{teo-grun-tsang} by considering for a group $(G, {\cdot})$ the two-sided brace $(G,{{\cdot}^{\text{op}}},{\cdot})$, with ${\cdot}^{\text{op}}$ the opposite operation of $\cdot$ defined by means of $a\cdot^{\text{op}}b=b\cdot a$. For this brace, $G*G$ coincides with the derived subgroup of~$G$.




The first main result of this paper is a stronger version of Theorem~\ref{teo-grun-tsang}, in which we replace $B*B$ by the derived ideal.

\begin{athm}\label{teo-grun}
Let $B$ be a perfect skew brace. Then $\centre_2(B)=\centre(B)$ if, and only if, the multiplicative conjugation by every element of $\centre_2(B)$ corresponds to an automorphism of $(B,+)$. 

In particular, if $B$ is a two-sided perfect skew brace, then $\centre(B/{\centre(B)})=1$.
\end{athm}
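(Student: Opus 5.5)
The converse implication is immediate. If $\centre_2(B)=\centre(B)$, then every $z\in\centre_2(B)$ lies in $\centre(B,{\cdot})$, so conjugation by $z$ is the identity, which is an automorphism of $(B,{+})$. The "in particular" part follows from \cite[Lemma~4.1]{Nasybullov19}: in a two-sided skew brace every multiplicative conjugation is additive, so $\centre_2(B)=\centre(B)$, that is, $\centre(B/{\centre(B)})=0$.

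For the main implication, the plan imitates the proof of Grün's lemma. Fix $z\in\centre_2(B)$ and let $Z=\centre(B)$. I will use three facts about $Z$:
\begin{itemize}
\item $Z\subseteq\Soc(B)\cap\Fix(B)\cap\centre(B,{\cdot})$;
\item for $w\in Z$ and $b\in B$ we have $wb=w+b=b+w=bw$;
\item $Z$ is a trivial brace with abelian additive group, so it is an abelian brace.
\end{itemize}
Since $zZ$ is central in $B/Z$, all of $[z,b]_\bullet$, $[z,b]_+$, $z\ast b$ and $b\ast z$ lie in $Z$ for every $b\in B$.

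The key device is the following. Suppose $f\colon B\to Z$ is a map that is simultaneously a homomorphism of $(B,{+})$ and of $(B,{\cdot})$. Then $f$ is a brace homomorphism onto a subbrace of an abelian brace, so $\ker f$ is an ideal with abelian quotient. Hence $\ker f\supseteq[B,B]^B=B$ and $f=0$. I apply this three times, each step using the previous one.

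\emph{Step 1: $z$ is multiplicatively central.} Take $f(b)=[z,b]_\bullet$. This map is a multiplicative homomorphism by the usual group argument, because its values are multiplicatively central. For additivity, the hypothesis that conjugation by $z$ is additive gives
\[
z(b+c)z^{-1}=[z,b]_\bullet b+[z,c]_\bullet c=[z,b]_\bullet+[z,c]_\bullet+b+c .
\]
Multiplying on the right by $(b+c)^{-1}$ and using $x+y=xy$ for $x\in Z$ yields $f(b+c)=f(b)+f(c)$. Hence $f=0$, so $z\in\centre(B,{\cdot})$.

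\emph{Step 2: $z\in\ker\lambda$.} Take $f(b)=z\ast b=\lambda_z(b)-b$. Additivity follows from $\lambda_z\in\Aut(B,{+})$ and the centrality of $z\ast c$. For multiplicativity, I expand $zbc=bzc$ using $zc=z+(z\ast c)+c$, $zb=bz$, and $bw=b+w$ for $w\in Z$. This should give
\[
\lambda_z(bc)=z\ast b+z\ast c+bc .
\]
Hence $f=0$ and $z\in\ker\lambda$.

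\emph{Step 3: $z$ is additively central.} Now $z+b=zb=bz=b+\lambda_b(z)$, which gives $[z,b]_+=b\ast z$. The map $b\mapsto[z,b]_+$ is an additive homomorphism, again by the group-theoretic $\centre_2$ argument. The map $b\mapsto b\ast z=\lambda_b(z)-z$ is multiplicative, since
\[
\lambda_{bc}(z)=\lambda_b\bigl(z+(c\ast z)\bigr)=\lambda_b(z)+(c\ast z),
\]
where the last equality uses $Z\subseteq\Fix(B)$. These two maps coincide, so the common map is a brace homomorphism into $Z$ and is therefore $0$. Thus $z\in\centre(B,{+})$ and $z\in\Fix(B)$, and together with Steps 1 and 2 this gives $z\in\centre(B)$.

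I expect the main obstacle to be the multiplicativity computation in Step 2, together with arranging the order of the three steps so that each homomorphism property is available when it is needed. This is the only place where the hypothesis on conjugation enters, and it enters through Step 1.
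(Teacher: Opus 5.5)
Your proof is correct, and it takes a genuinely different route from the paper's. The hypothesis enters only in the additivity of $b\mapsto[z,b]_\bullet$ in Step~1. The expansion of $zbc=bzc$ in Step~2 does give $z\ast(bc)=(z\ast b)+(z\ast c)$. In Step~3, $[z,b]_+=b+(b\ast z)-b=b\ast z$ holds because $b\ast z$ is additively central. Your key device is sound, since any map into $\centre(B)$ that is both additive and multiplicative kills $B\ast B$ and $[B,B]_+$, hence kills $[B,B]^B=B$.

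The paper never argues elementwise.
\begin{itemize}
\item It passes to the large trifactorised group $G=[K]C$ and uses $T_2^Z\le\centre_2(G)$ together with $[\centre_2(G),G']=1$.
\item This gives, for \emph{every} skew brace, $\centre_2(B)\ast[B,B]^B=0$, $[\centre_2(B),[B,B]^B]_+=0$ and $[B,B]_\bullet\ast\centre_2(B)=0$.
\item For perfect $B$ this yields $\centre_2(B)\subseteq\Soc(B)$ with no hypothesis at all.
\item It then reduces $\centre_2(B)=\centre(B)$ to $(B\ast B)\ast\centre_2(B)=0$ via $B=(B\ast B)[B,B]_\bullet$.
\item Finally it quotes Tsang to identify that condition with the conjugation hypothesis.
\end{itemize}

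Your Gr\"un-style argument is self-contained: no trifactorised groups, no group-theoretic commutator calculus, no appeal to Tsang's equivalence, and the hypothesis is used directly. The paper's route buys identities valid in arbitrary skew braces and the intermediate criterion $(B\ast B)\ast\centre_2(B)=0$.

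Your ordering does hide the unconditional part of the result. Step~2 does not actually need Step~1. Since $[z,b]_\bullet\in\ker\lambda$, one has $\lambda_z\lambda_b=\lambda_b\lambda_z$, and together with $\centre(B)\subseteq\Fix(B)$ this already gives $z\ast(bc)=(z\ast b)+(z\ast c)$. Also, without Step~1 one still gets $[z,b]_+=[z,b]_\bullet+(b\ast z)$, which is multiplicative in $b$. So your device also recovers $\centre_2(B)\subseteq\Soc(B)$ for every perfect $B$, and the hypothesis is needed only to obtain $z\in\Fix(B)$.
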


The following skew brace analogue of Schur's theorem has been proved in \cite[Theorem~5.4]{JespersKubatVanAntwerpenVendramin21}.
\begin{teo}\label{teo-schur-JKVV}
Let $B$ be a skew brace such that $B/{\centre(B)}$ is finite, then $B*B$ and $[B, B]_+$ are finite. In particular, $[B,B]^B$ is finite.
\end{teo}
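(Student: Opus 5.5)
The plan is to reduce everything to the classical Schur theorem, applied once to $(B,{+})$ and once to a suitable semidirect product that encodes the star operation as a group commutator. Throughout, write $Z=\centre(B)$. Since $\centre(B)$ is an ideal, the quotient $B/Z$ is a skew brace, and its additive and multiplicative cosets coincide. So the hypothesis says that $Z$ has finite index $n$ in both $(B,{+})$ and $(B,{\cdot})$.

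\emph{Step 1: the additive commutator.} We have $Z=\Soc(B)\cap\Fix(B)\subseteq\centre(B,{+})$. Hence $(B,{+})/\centre(B,{+})$ is a quotient of $(B,{+})/Z$ and is finite. Schur's theorem for the group $(B,{+})$ then gives that $[B,B]_+$ is finite.

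\emph{Step 2: the star subgroup.} Consider the semidirect product $G=(B,{+})\rtimes\lambda(B)$, where $\lambda(B)\le\Aut(B,{+})$ is the image of the lambda map.
\begin{itemize}
\item In $G$ we have $[b,\lambda_a]=\pm(\lambda_a(b)-b)$ up to the usual conventions. Moreover $a*b=-a+ab-b=\lambda_a(b)-b$. Therefore every generator $a*b$ of $B*B$ is a commutator in $G$, and $B*B\subseteq G'$.
\item Since $Z\subseteq\ker\lambda$, the group $\lambda(B)\cong (B,{\cdot})/\ker\lambda$ is a quotient of $(B,{\cdot})/Z$, so it is finite.
\item The subgroup $Z\times 1$ is central in $G$. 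It commutes with $(B,{+})$ because $Z\subseteq\centre(B,{+})$, and it commutes with $\lambda(B)$ because $Z\subseteq\Fix(B)$.
\end{itemize}
It follows that $|G:\centre(G)|\le|G:Z\times 1|=n\,|\lambda(B)|<\infty$. Schur's theorem for $G$ yields that $G'$ is finite, and hence so is $B*B$.

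\emph{Step 3: the derived ideal.} By the formula $[B,B]^B=B*B+[B,B]_+$ quoted earlier, and since $B*B$ is an ideal and hence normal in $(B,{+})$, the derived ideal is a product of a finite normal subgroup and a finite subgroup of $(B,{+})$. So $[B,B]^B$ is finite.

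The only step requiring care is Step 2. There one must check that the sign and ordering conventions really make $a*b$ a commutator in $G$, and that $Z\times1$ is central there rather than merely normal. Both follow directly from $Z\subseteq\Fix(B)\cap\centre(B,{+})\cap\ker\lambda$, so I do not expect a genuine obstacle.
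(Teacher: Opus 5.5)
Your proof is correct and is essentially the argument the paper gives for Theorem~\ref{teo-schur-bound}. The paper only cites this statement from Jespers, Kubat, Van Antwerpen and Vendramin, but reproves it there: pass to the small trifactorised group $G=[K]E$ with $E=\lambda(B)$, note that $\centre(B)$ embeds centrally in $G$ with finite index, apply Schur's theorem, and read off $[B,B]^B$ as $K'[E,K]\le G'$. The one difference is that your separate Step~1 is redundant, since $K'\le G'$ already, so the single application of Schur to $G$ gives finiteness of $B*B$, $[B,B]_+$ and $[B,B]^B$ at once.
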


To prove a skew brace version of Baer's theorem for the case $n>1$, we have had to add another hypothesis.
\begin{athm}\label{teo-baer}
If $B$ is a skew brace such that $B/{\centre_n(B)}$ is finite and, if $n>1$, $[B,B]^B$ is contained in $\ker\lambda$, then $\Gamma_{n+1}(B)$ is finite.
\end{athm}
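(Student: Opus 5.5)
The plan is to argue by induction on $n$, with the case $n=1$ supplied by Theorem~\ref{teo-schur-JKVV}. When $B/\centre(B)$ is finite, that result makes $B\ast B$ and $[B,B]_+$ finite. Then $\Gamma_2(B)=[B,B]^B=B\ast B+[B,B]_+$ is finite, since $B\ast B$ is an ideal, hence normal in $(B,+)$, and the sum of a finite normal subgroup and a finite subgroup is finite. For the induction I would not pass to quotients by $\centre(B)$ directly. Instead I would work in the trifactorised group $G=K\rtimes C$ attached to $B$ (the semidirect product of $(B,+)$ with $(B,\cdot)$ acting through $\lambda$). The aim is to translate $\centre_n(B)$ and $\Gamma_{n+1}(B)$ into group-theoretic terms inside $G$ and then invoke the classical Baer theorem there.

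The first step is to prove the following. If $[B,B]^B\subseteq\ker\lambda$, then for every $k$ the image of $\centre_k(B)$ in $G$ lies in a term of the upper central series of $G$, or of a suitable subgroup $H\le G$ of finite index in which $B$ is embedded diagonally. Concretely I would take $H$ to be generated by the elements $(b,b)$ together with $K$. Each $\Gamma_{k+1}(B)$ should then correspond to a subgroup contained in $\gamma_{k+1}(H)$, up to bounded index or bounded extension. The hypothesis $[B,B]^B\subseteq\ker\lambda$ is used in one precise way: it makes $\lambda$ trivial on the deeper terms $\Gamma_k(B)$ for $k\ge2$. In $G$, the multiplicative and additive copies of these ideals therefore commute with the relevant part of $K$, and iterated commutators $[\,\cdot\,,B]^B$ can be computed by ordinary group commutators. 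Proving this translation, by induction on $k$ and using the description $\Gamma_{k+1}(B)=\langle \Gamma_k(B)\ast B+B\ast\Gamma_k(B)+[\Gamma_k(B),B]_+\rangle^B$, is the main obstacle. The ideal closure $\langle\ \rangle^B$ may enlarge the subgroup. I would handle this by showing that in $G$ the ideal generated corresponds to the normal closure in $H$, which is harmless because $\gamma_{k+1}(H)$ is normal.

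Once the translation is in place, I would pass from $B/\centre_n(B)$ finite to $H/\centre_n(H)$ finite. For this I would check that $\centre_n(B)\times\centre_n(B)$ (as an additive-times-multiplicative pair) lands in $\centre_n(H)$ modulo a finite piece. Baer's theorem then makes $\gamma_{n+1}(H)$ finite. Since $\Gamma_{n+1}(B)$ embeds (as a set, via $b\mapsto(b,1)$ or the diagonal) into a subgroup controlled by $\gamma_{n+1}(H)$, $\Gamma_{n+1}(B)$ is finite.

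If the embedding argument is too lossy, the fallback is a direct induction that mimics Baer's original proof. Set $\bar B=B/\centre(B)$. Then $\bar B/\centre_{n-1}(\bar B)$ is finite, so by induction $\Gamma_n(\bar B)$ is finite, which means $\Gamma_n(B)\centre(B)/\centre(B)$ is finite. Next I would show that the bilinear-type maps $(x,b)\mapsto x\ast b$, $b\ast x$ and $[x,b]_+$, for $x\in\Gamma_n(B)$ and $b\in B$, depend only on classes modulo $\centre(B)$ on each side. This is where $\ker\lambda$ is needed, to get additivity in $x$. These values then take finitely many values and generate a finite normal, $\lambda$-invariant subgroup, and that subgroup is $\Gamma_{n+1}(B)$.
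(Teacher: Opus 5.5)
Your main plan is the paper's proof. In the large trifactorised group $G=K\rtimes C$, the trifactorised subgroup $T_n^Z$ of $\centre_n(B)$ satisfies $T_n^Z\le\centre_n(G)$, and this needs no hypothesis at all. Hence $|G:\centre_n(G)|\le|K:T_n^Z\cap K|\,|C:T_n^Z\cap C|<\infty$, and Baer's theorem makes $\gamma_{n+1}(G)$ finite. The hypothesis $[B,B]^B\subseteq\ker\lambda$ is used exactly as you indicate: it kills $\Gamma_k(B)\ast B$ for $k\ge 2$ and makes normal closures in $G$ into ideals, which gives $(\Gamma_{n+1}(B),+)=\gamma_{n+1}(G)\cap K$.

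Your hedges are unnecessary. The subgroup generated by $K$ and the diagonal elements is all of $G$, and both containments hold exactly, with no finite-index or finite-extension error. So the fallback is not needed, which is just as well: its last step only produces a finitely generated normal subgroup, and finiteness is precisely the non-trivial content of Baer's theorem.
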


Baer's theorem may be recovered from Theorem~\ref{teo-baer} when $B$ is trivial, because $\ker\lambda=B$ in this case.

A natural follow-up question of this result is whether we can bound the order of $\Gamma_{n+1}(B)$ by a function of the order of $B/{\centre_n(B)}$. In the case of groups and $n=1$, we have some bounds (see \cite[Section 3]{Wiegold65})
\begin{teo}\label{teo-schur-bound-weig}
Let $G$ be a group such that $|G/{\centre(G)}|=t$. Then $|G'|\leq\omega(t)$, where $\omega(t)=t^m$, $m=(\log_p(t)-1)/2$ and $p$ is smallest prime divisor of $t$.
\end{teo}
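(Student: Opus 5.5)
This statement is quoted from \cite[Section~3]{Wiegold65} and is used here only as motivation. If I had to prove it myself, I would follow Wiegold's counting argument along a chain of subgroups above the centre. Write $Z=\centre(G)$ and $|G/Z|=t$, with $p$ the smallest prime divisor of~$t$.

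\textbf{Step 1: the chain.} Choose elements $x_1,\dots,x_k$ of $G$ and put $H_0=Z$ and $H_i=\langle x_1,\dots,x_i\rangle Z$. At each stage take $x_i\notin H_{i-1}$, stopping when $H_k=G$. Every index $|H_i:H_{i-1}|$ is then a divisor of $t$ larger than~$1$, so it is at least $p$. Consequently $p^k\le t$, that is, $k\le\log_p t$, and $|H_i:Z|\ge p^i$.

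\textbf{Step 2: how each generator enlarges the derived subgroup.} I would show by induction on~$i$ that
\[
|H_i'|\le |H_{i-1}'|\cdot n_i,
\]
where $n_i$ counts the commutators $[x_i,y]$ needed to generate $H_i'$ modulo $H_{i-1}'$, with $y$ running over $H_{i-1}$.
\begin{itemize}
\item Since $[x_i,yz]=[x_i,y][x_i,z]^y$ and $Z$ is central, the commutator $[x_i,y]$ depends only on the coset $yZ$. Hence $n_i\le |H_{i-1}:Z|$.
\item Because $[x_i,y]=[x_i,y']$ exactly when $y'y^{-1}\in\Cent_G(x_i)$, we also have $n_i\le|G:\Cent_G(x_i)|$. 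This index is at most $t/p$, since $\Cent_G(x_i)$ contains $\langle x_i\rangle Z$, which properly contains~$Z$.
\end{itemize}
To turn the generator count into an order bound, I would verify that $H_i'$ is generated modulo $H_{i-1}'$ by these commutators together with their conjugates. Modulo $H_{i-1}'$, conjugation by elements of $H_{i-1}$ only multiplies a commutator $[x_i,y]$ by another element of the same set, so the normal closure adds nothing new.

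\textbf{Step 3: the arithmetic.} Combining the two bounds of Step~2 gives
\[
|G'|\le\prod_{i=1}^{k}\min\bigl(|H_{i-1}:Z|,\;t/p\bigr).
\]
The main obstacle is to extract exactly $t^{(\log_p t-1)/2}$ from this product. The crude use of $t/p$ at every step gives only roughly $t^{\log_p t}$. What I would try first is to pair step $i$ with step $k+1-i$, using the first bound for the early indices and a complementary bound $|G:H_{i-1}|$-type estimate for the late ones. Each pair then contributes at most $t/p$, and summing the exponents over roughly $(\log_p t-1)/2$ pairs yields $\omega(t)$.

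If the pairing does not close up cleanly, I would fall back on induction on~$t$, applied to $G/\langle[x_1,G]\rangle$ with $x_1$ chosen so that $|G:\Cent_G(x_1)|$ is minimal.
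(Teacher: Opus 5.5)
The paper does not prove this statement; it quotes it from Wiegold and uses it as a black box. Your sketch has a genuine gap in Step~2, and it is not an arithmetic one.

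The inequality $|H_i'|\le|H_{i-1}'|\cdot n_i$ conflates the number of generators of a group with its order. Knowing that $H_i'$ is generated modulo $H_{i-1}'$ by $n_i$ commutators gives no bound on $|H_i'/H_{i-1}'|$ unless that set of commutators is closed under multiplication, which it is not in general. There is a second problem. $H_{i-1}$ need not be normal in $H_i$, so $H_{i-1}'$ need not be either. The normal closure in $H_i$ also involves conjugation by $x_i$, which need not map $\{[x_i,y]\mid y\in H_{i-1}\}$ into itself. Your remark that the normal closure ``adds nothing new'' only treats conjugation by $H_{i-1}$.

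Here is a counterexample. Let $G=D_{16}=\langle r,s\mid r^8=s^2=1,\ srs^{-1}=r^{-1}\rangle$, so $\centre(G)=\langle r^4\rangle$, $t=8$, $p=2$. Take $x_1=s$, $x_2=r$, i.e.\ the chain $\centre(G)<\langle s\rangle\centre(G)<G$. Then $H_1$ is abelian, and the commutators $[r,y]$ with $y\in H_1$ take only the values $1$ and $[r,s]=r^2$. So your Step~3 product is $\min(1,4)\cdot\min(2,4)=2$, whereas $|G'|=|\langle r^2\rangle|=4$.

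Step~3 also misplaces the difficulty. Suppose $|G'|\le\prod_i|H_{i-1}:\centre(G)|$ were true. Then $|H_j:\centre(G)|\le t/p^{k-j}$ would already give $|G'|\le p^{(k-1)(L-k/2)}\le p^{L(L-1)/2}=\omega(t)$, where $L=\log_p t\ge k$, with no pairing needed.

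What is missing is a way to make the commutators $[x,g]$, $g\in G$, form a normal subgroup whose order is bounded by $|G:\Cent_G(x)|$. Your fallback has the same defect. $\langle[x_1,G]\rangle$ is normal, but its order is not controlled by $|G:\Cent_G(x_1)|$: in $D_{16}$, $|G:\Cent_G(r)|=2$ while $\langle[r,G]\rangle=\langle r^2\rangle$ has order $4$.

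The device does work when $x\in\centre_2(G)\setminus\centre(G)$:
\begin{itemize}
\item $g\mapsto[x,g]$ is then a homomorphism $G\to\centre(G)$ with kernel $\Cent_G(x)\ge\langle x\rangle\centre(G)$.
\item So $N=[x,G]$ is a central subgroup of order at most $t/p$, and $x$ is central in $G/N$.
\item Hence $|G/N:\centre(G/N)|$ divides $|G:\langle x\rangle\centre(G)|\le t/p$.
\item Induction on $t$, using monotonicity of $\omega$, gives $|G'|\le (t/p)\cdot(t/p)^{(\log_p(t/p)-1)/2}=t^{(\log_p t-1)/2}$.
\end{itemize}
This proves the bound when $G/\centre(G)$ is nilpotent; for $t=p^n$ it reads $|G'|\le p^{n(n-1)/2}$. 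In general, however, $\centre_2(G)$ may equal $\centre(G)$, e.g.\ when $G/\centre(G)\cong S_3$. Wiegold's paper, judging by its title, handles the general case through the Schur multiplier of $G/\centre(G)$, of which $G'\cap\centre(G)$ is a homomorphic image. As it stands, Step~2 is false and nothing in your outline replaces it.
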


We can generalise this result to skew braces.
\begin{athm}\label{teo-schur-bound}
Let $B$ be a skew brace such that $|B/{\centre(B)}| = t$, then $[B,B]^B$ is finite and
\[|[B,B]^B|\leq \frac{\omega(t^2/|\ker\lambda:\ker\lambda\cap (\centre(B),\cdot)|)}{|((B,\cdot)/{\ker\lambda})'|}.\]
\end{athm}

Observe that when we apply Theorem~\ref{teo-schur-bound} to a trivial skew brace, we recover Theorem~\ref{teo-schur-bound-weig}. 

In case that $(B,{+})$ is abelian, we obtain a bound for $\lvert \Gamma_{n+1}(B)\rvert$ as a function of $\lvert B/{\centre_n(B)}\rvert$.
\begin{athm}\label{teo-baer-bound}
Let $B$ be a skew brace of abelian type, then:
\begin{enumerate}
\item If $|B/{\centre(B)}|=t$, then $[B,B]^B$ is finite and 
  $|[B,B]^B|\leq {\omega(t^2)}$. 
\item If $[B,B]^B$ is contained in $\ker\lambda$ and $|B/{\centre_n(B)}|=t$, then $\Gamma_{n+1}(B)$ is finite and $|\Gamma_{n+1}(B)|\leq {\omega(t^2)}^{t^{n-1}}$.
\end{enumerate}
\end{athm}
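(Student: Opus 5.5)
For part~1 we will work in the semidirect product $G=(B,{+})\rtimes_\lambda(B,{\cdot})$, the group underlying the trifactorised group of~$B$, with elements written as pairs $(b,a)$. Take $z\in\centre(B)$. Then $z\in\Soc(B)\cap\Fix(B)$, so $\lambda_z=\id$, $z$ is additively central, and $\lambda_a(z)=z$ for all~$a$. Hence both $(z,1)$ and $(0,z)$ are central in~$G$; for $(0,z)$ we also use that $z$ is central in $(B,{\cdot})$. So $\centre(B)\times\centre(B)$ lies in $\centre(G)$, and it has index $t^2$ in~$G$. Theorem~\ref{teo-schur-bound-weig} then gives $|G'|\le\omega(t^2)$.

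Next we compare $G'$ with $[B,B]^B$. Since $(B,{+})$ is abelian, $[B,B]_+=0$, so by the description recalled before Theorem~\ref{teo-grun} we have $[B,B]^B=(B\ast B)[B,B]_\bullet$. We consider the projection $\pi\colon G\to(B,{\cdot})$ onto the second coordinate and check two facts.
\begin{itemize}
\item $\pi(G')\supseteq[B,B]_\bullet$, because $[(0,a),(0,c)]=(0,[a,c]_\bullet)$.
\item $\ker\pi\cap G'\supseteq (B\ast B)\times\{1\}$. Indeed, $[(b,1),(0,a)]=(b-\lambda_a(b),1)$ up to inversion, and the elements $\lambda_a(b)-b=a\ast b$ generate $B\ast B$ additively.
\end{itemize}
Therefore $|G'|\ge|B\ast B|\cdot|[B,B]_\bullet|\ge|(B\ast B)[B,B]_\bullet|=|[B,B]^B|$, which proves part~1. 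Part~1 can also be read off from Theorem~\ref{teo-schur-bound}, but this direct argument avoids any monotonicity issue for~$\omega$.

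For part~2 we argue by induction on~$n$, with part~1 as the base case. We put $\bar B=B/{\centre(B)}$. Then $\centre_{n-1}(\bar B)=\centre_n(B)/{\centre(B)}$, so $|\bar B/{\centre_{n-1}(\bar B)}|=t$. Moreover, the hypothesis $[B,B]^B\subseteq\ker\lambda$ passes to the quotient. By induction, $(\Gamma_n(B)+\centre(B))/{\centre(B)}$ is finite, of order at most $s=\omega(t^2)^{t^{n-2}}$, and in particular of order at most $t$, since it is a section of~$\bar B$.

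Fix a transversal $x_1,\dots,x_k$ of $\centre(B)$ in $\Gamma_n(B)+\centre(B)$; here $k\le t$. Because central elements contribute nothing to stars or commutators, $\Gamma_{n+1}(B)=[\Gamma_n(B),B]^B$ is generated as an ideal by the elements $x_i\ast b$, $b\ast x_i$ and $[x_i,b]_\bullet$, where $b$ also runs only over a transversal of $\centre(B)$ in~$B$. Since $\Gamma_{n+1}(B)\subseteq[B,B]^B\subseteq\ker\lambda$, on this ideal the additive and multiplicative structures are compatible: left multiplication by these elements is translation. This should let us write $\Gamma_{n+1}(B)$ as a sum over $i$ of ideals $J_i$ attached to each~$x_i$. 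Each $J_i$ should be controlled by a Schur-type situation with central quotient of order at most $t$, so that $|J_i|\le\omega(t^2)^{t^{n-2}}$ by the argument of part~1, iterated along the induction. Multiplying over the $k\le t$ indices then gives $|\Gamma_{n+1}(B)|\le\omega(t^2)^{t^{n-1}}$.

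The main obstacle is the decomposition step in part~2. One has to show precisely that the ideal closure of the generators attached to a single $x_i$ has the claimed bound, and that the ideal closure does not create additional factors. This is where the hypothesis $[B,B]^B\subseteq\ker\lambda$ must be used. I would first try to realise each $J_i$ inside the commutator subgroup of a suitable subgroup of the semidirect product $G$ above, whose centre has index at most $t^2$, and then reuse the computation from part~1.
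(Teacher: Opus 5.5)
Your part~1 is correct and is essentially the paper's argument: the paper quotes Theorem~\ref{teo-schur-bound}, which is the same Wiegold-in-a-trifactorised-group computation done in the small group. Note that in abelian type $[B,B]^B=B\ast B$, so the projection count is not even needed.

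Your remark that this route avoids monotonicity of $\omega$ is not accurate. The inclusion $\centre(B)\times\centre(B)\le\centre(G)$ only shows that $|G:\centre(G)|$ divides $t^2$, so you still need $\omega(d)\le\omega(t^2)$ for $d\mid t^2$. This is true, since $d\le t^2$ and the smallest prime of $d$ is at least that of $t^2$, but it has to be said.

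Part~2 has a genuine gap. The decomposition of $\Gamma_{n+1}(B)$ into ideals $J_i$ with $|J_i|\le\omega(t^2)^{t^{n-2}}$ is precisely the content of the statement, and you leave it unproved. The counting around it is also wrong. You claim $(\Gamma_n(B)+\centre(B))/\centre(B)$ has order at most $t$ ``since it is a section of $\bar B$'', but this inference is invalid. Here $t=|B/\centre_n(B)|$, not $|B/\centre(B)|$, and $\bar B=B/\centre(B)$ may even be infinite. The only bound you have is $s=\omega(t^2)^{t^{n-2}}$, and nothing makes it at most $t$. So $k\le t$ is unjustified. Indexing over cosets of $\centre(B)$ on the $\Gamma_n$ side would naturally produce up to $s$ factors, not $t$, and so cannot give the exponent $t^{n-1}$.

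The missing ideas, which the paper uses, are these.

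(a) Under abelian type and $[B,B]^B\subseteq\ker\lambda$, one has $\Gamma_m(B)=B^m$ for all $m$. First, $\Gamma_2(B)=B\ast B=B^2$. For $m\ge3$, $\Gamma_{m-1}(B)\ast B=0$, additive commutators vanish, and $B^m$ is already an ideal because $B^m\subseteq\ker\lambda$ gives $B^m\ast B=0$. This removes the ideal-closure problem you worried about. It is also the only place the $\ker\lambda$ hypothesis enters; the bound itself is proved for $B^{n+1}$ without it.

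(b) $\centre_n(B)\ast B^n=0$. This follows from $T_n^Z\le\centre_n(G)$, the fact that $B^n$ corresponds to $[C,[\cdots[C,K]\cdots]]\le\gamma_n(G)$, and $[\centre_n(G),\gamma_n(G)]=1$. Hence for $z\in\centre_n(B)$ and $x\in B^n$ we get $(uz)\ast x=u\ast(z\ast x)+z\ast x+u\ast x=u\ast x$. So the reduction must be made on the \emph{left} factor of the star, modulo $\centre_n(B)$, over a transversal $T$ with exactly $t$ elements.

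(c) For $u\in T$, the map $l_u(x)=u\ast x$ is an endomorphism of $(B^n,+)$ vanishing on $B^n\cap\centre(B)\subseteq\Fix(B)$. Therefore $|l_u(B^n)|\le|B^n/(B^n\cap\centre(B))|=|(B/\centre(B))^n|\le s$, by induction applied to $B/\centre(B)$. Together with Lemma~\ref{lemma-add-power-star} this gives $B^{n+1}=\sum_{u\in T}l_u(B^n)$, hence $|B^{n+1}|\le s^t=\omega(t^2)^{t^{n-1}}$.
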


We leave as an open question the existence of skew braces with $(B, {+})$ non-abelian and $[B, B]^B$ not contained in $\ker \lambda$ such that $B/{\centre_n(B)}$ is finite, but $\Gamma_{n+1}(B)$ is infinite.

\section{Preliminaries on trifactorised groups}
We shall adhere to the notation of \cite{BallesterEstebanPerezAPerezC26-commmathstat-categoriesskewleftbraces}. For the reader's convenience, we outline in this section the notation and the results that we will need for the proofs of our theorems.

Given a skew brace $(B,+,\cdot)$, we denote by $K=(B,+)$ its additive group and by $C=(B,\cdot)$ its multiplicative group. The identity map $\delta\colon C\longrightarrow K$ is a bijective derivation with respect to the lambda action, this means that $\delta(ce)=\delta(c)\lambda_c(\delta(e))$ for all $c,e\in C$. 

Consider $\eta\colon C\longrightarrow E$ a group epimorphism such that $\ker\eta\leq \ker\lambda$. We can define a group homomorphism $\bar\lambda\colon E\longrightarrow\Aut(K)$ such that $\bar\lambda(\eta(c))=\lambda(c)$ for all $c\in C$. Hence, $E$ acts on $K$ via automorphisms by means of $\bar\lambda$, and so we can consider the corresponding semidirect product $G=[K]E$ (see~\cite{BallesterEstebanPerezAPerezC26-commmathstat-categoriesskewleftbraces}).

We identify $K$ with the normal subgroup $\{(k, 1)\;|\; k\in K\}$ and $E$ with the subgroup $\{(0, \eta(c))\;|\; c\in C\}$ of~$G$. For the sake of simplicity, we will denote an element $(k, e)$ of the group $G$, with $k\in K$ and $e\in E$, in the form $ke$. Then $G$ can be viewed as a split extension $G = KE$ and $K \cap E = 1$, and the lambda action of $E$ on $K$ is just the conjugation action within $G$. Furthermore, $G$ possesses a triple factorisation $G=KE=KH=HE$, where $H=\{\delta(c)\eta(c)\;|\; c\in C\}$ is a subgroup of $G$, and $K\cap E=H\cap E=1$. We say that $(G,K,H,E)$ is a \emph{(generalised) trifactorised group} associated with $B$ (see \cite{BallesterEstebanPerezAPerezC26-commmathstat-categoriesskewleftbraces}).


There are  two particular trifactorised groups associated with $B$ that will be relevant for this work. If we consider $\eta=\id_C\colon C\longrightarrow C$, we have the \emph{large trifactorised group} associated with~$B$, and if we consider $\eta\colon C\longrightarrow E=\{(0,\lambda_c)\;|\;c\in C\}\leq \Hol(K)$ given by $\eta(c)=(0,\lambda_c)$,  we have the \emph{small trifactorised group} associated with $B$. 

The following presents the relationship between the subbrace and ideal structure of a skew brace and the subgroup structure of an associated trifactorised group $(G,K,H,E)$.

\begin{prop}[{\cite[Propositions~5.2~(1), 5.2~(4), and~5.4]{BallesterEstebanPerezAPerezC26-commmathstat-categoriesskewleftbraces}}]
Let $L$ be a subset of $K$. Then $L$ is a subbrace of $B$ if and only if, $T=LH\cap LE\leq G$. In this case, 
\[(T,T\cap K,T\cap H,T\cap E)\]
is a trifactorised group associated with $L$.

Moreover, $L$ is an ideal of $B$ if and only if, $T$ is normal in $G$.
\end{prop}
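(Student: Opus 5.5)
The plan is to compute $LH\cap LE$ explicitly from the multiplication in $G=[K]E$. Writing elements as pairs, $(k,1)(k',e)=(k+k',e)$ and $(0,e)(k,1)(0,e)^{-1}=(\bar\lambda(e)(k),1)$. The generic elements of $LE$ and $LH$ are
\[
(l,\eta(c))\qquad\text{and}\qquad (l,1)\bigl(\delta(c),\eta(c)\bigr)=(l+c,\eta(c)),\qquad l\in L,\ c\in C.
\]
Hence a pair $(k,e)$ lies in $T$ if and only if $k\in L$ and $k\in L+c$ for some $c$ with $\eta(c)=e$. Everything below comes from this description.

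\textbf{Subbrace case.} Suppose first that $L$ is a subbrace. The condition $k\in L+c$ with $k\in L$ forces $c\in L$, so
\[
T=\{(k,\eta(c))\mid k,c\in L\}=L\,\eta(L).
\]
Since $L$ is a subgroup of $(B,\cdot)$, $\eta(L)$ is a subgroup of $E$. It normalises $L$, because $\lambda_c(l)=-c+cl\in L$ for $c$, $l\in L$. Hence $T$ is a subgroup. Reading off intersections gives
\[
T\cap K=L,\qquad T\cap E=\eta(L),\qquad T\cap H=\{\delta(c)\eta(c)\mid c\in L\}.
\]
The factorisations $T=(T\cap K)(T\cap E)=(T\cap K)(T\cap H)=(T\cap H)(T\cap E)$ are then immediate from $(k,\eta(c))=(k-c,1)(c,\eta(c))$ and its analogues. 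We also have trivial intersections with $T\cap E$. Finally, $\eta|_L\colon (L,\cdot)\to\eta(L)$ is an epimorphism whose kernel lies in $\ker\lambda$, hence acts trivially on $L$. So $T$ is a trifactorised group associated with $L$.

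\textbf{Converse.} Assume $T\le G$.
\begin{enumerate}
\item Since $(0,1)\in T\subseteq LE$, we get $0\in L$.
\item Taking $c=0$ shows $(k,1)\in LH$ for every $k\in L$, so $L=T\cap K$. In particular $L$ is an additive subgroup.
\item For $c\in L$, the element $(c,\eta(c))$ lies in $LE$ and in $LH$ (with $l=0$). Conversely, any element of $H\cap LE$ has this form, so $T\cap H=\{\delta(c)\eta(c)\mid c\in L\}$.
\item Since $T\cap H$ is a subgroup and $\delta(c)\eta(c)\cdot\delta(d)\eta(d)=\delta(cd)\eta(cd)$, the set $L$ is closed under products and inverses in $(B,\cdot)$.
\end{enumerate}
Thus $L$ is a subbrace.

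\textbf{Ideal criterion.} We may now assume $L$ is a subbrace and $T=L\eta(L)$. Since $G=KE$, $T$ is normal if and only if it is normalised by $K$ and by $E$. We test this on the generators $L$ and $\eta(L)$.
\begin{itemize}
\item Conjugating $L$ by $K$ stays in $L$ if and only if $(L,+)\trianglelefteq(B,+)$.
\item Conjugating $L$ by $\eta(C)$ gives $\lambda_c(L)$, so this stays in $L$ if and only if $B\ast L\subseteq L$, using $c\ast l=-c+\lambda_c(l)$ with $c\in B$, $l\in L$, together with additive normality.
\item Conjugating $\eta(c)$, $c\in L$, by $(k,1)$ gives $(k-\lambda_c(k),\eta(c))$. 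This lies in $T$ if and only if $k-\lambda_c(k)\in L$. Since $c\ast k=-c+\lambda_c(k)$ and $c\in L$ is normal in $(K,+)$, this is equivalent to $L\ast B\subseteq L$.
\item Conjugating $\eta(L)$ by $E$ lands in $\eta(L')$, where $L'=b^{-1}Lb$. This is automatic once $L$ is an ideal, since ideals are normal in $(B,\cdot)$. For the converse direction this item is not needed: normality of $T$ already gives the previous three conditions, which characterise an ideal.
\end{itemize}

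The main obstacle is the bookkeeping in this third item, namely matching the exact form $k-\lambda_c(k)$ against $c\ast k$ up to additive conjugation, and handling the possibly nontrivial $\ker\eta$ when identifying $T\cap E$. For the latter, use that $(0,e)\in T$ forces $0\in L+c$, that is $c\in L$.
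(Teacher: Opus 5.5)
The paper does not prove this proposition; it imports it by citation. Your direct computation in $[K]E$ therefore stands as a self-contained argument, and its structure is sound:
\begin{itemize}
\item the coordinate description of $LE$ and $LH$;
\item the identification $T=L\,\eta(L)$, with $T\cap E=\eta(L)=LH\cap E$, which correctly handles a non-trivial $\ker\eta$;
\item the recovery of $L=T\cap K$, and of $(L,\cdot)$ from $T\cap H$ via the isomorphism $c\mapsto\delta(c)\eta(c)$ of $C$ onto $H$;
\item the reduction of normality to conjugating the generators $L$ and $\eta(L)$ by $K$ and by $E$.
\end{itemize}
A side benefit of your route is that it makes explicit the shape $T=(T\cap K)(T\cap E)$ with $T\cap E=(\eta\circ\delta^{-1})(L)$. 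The paper uses this tacitly later: the trifactorised subgroup is written as $LM$ in the lemma on quotients, and as $K'[E,K]\,E'([E,K]H\cap E)$ for $[B,B]^B$.

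There is, however, a computational slip you must fix. The identity you invoke twice, $c\ast x=-c+\lambda_c(x)$, is false: in a trivial skew brace the left side is $0$ while the right side is $-c+x$. From $\lambda_c(x)=-c+cx$ one gets $c\ast x=-c+cx-x=\lambda_c(x)-x$.

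With the false identity your second check does not go through, because there $c$ ranges over all of $B$ and need not lie in $L$. With the correct identity both checks become exact and need no additive normality at all:
\begin{itemize}
\item For $l\in L$ and $c\in B$ one has $\lambda_c(l)=c\ast l+l$. So $\lambda_c(l)\in L$ if and only if $c\ast l\in L$, which gives the left-ideal condition $B\ast L\subseteq L$.
\item For $c\in L$ and $k\in K$ one has $k-\lambda_c(k)=-(c\ast k)$. So the third check is literally $L\ast B\subseteq L$.
\end{itemize}
The ``matching up to additive conjugation'' you flag as the main obstacle therefore does not arise.

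For the fourth check, multiplicative normality of an ideal $I$ follows in one line from the paper's definition. For $i\in I$ and $b\in B$, $ib=i+i\ast b+b\in b+I=bI$, using additive normality and $\lambda_b(I)=I$.
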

Therefore, $LH\cap LE$ is called the \emph{trifactorised subgroup} of $G$ associated with the subbrace $L$.

\begin{prop}[{\cite[Proposition~6.6]{BallesterEstebanPerezAPerezC26-commmathstat-categoriesskewleftbraces}}]\label{prop-3fact-quo}
Consider an ideal $I$ of $B$ and the trifactorised subgroup $T$ of $G$ associated with $I$. Then 
\[(G/T,KT/T,HT/T,ET/T)\]
is a trifactorised group associated with the skew brace $B/I$ with associated derivaton $\bar\delta\colon C/D\longrightarrow KT/T$ given by $\bar\delta(cD)=\delta(c)T$ and epimorphism $\bar\eta\colon C/D\longrightarrow ET/T$ given by $\bar\eta(cD)=\eta(c)T$, where $D=(I,\cdot)$.
\end{prop}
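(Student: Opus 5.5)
The plan is to check directly each ingredient in the definition of a trifactorised group associated with $B/I$. That means: the additive and multiplicative groups of $B/I$, an epimorphism $\bar\eta$ whose kernel lies in $\ker\lambda_{B/I}$, the induced action, and a semidirect decomposition of $G/T$ in which $HT/T$ is the graph of $\bar\delta$ twisted by $\bar\eta$.

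\textbf{Structure of $T$.} By the previous proposition, $T=IH\cap IE$ is normal in $G$. Moreover $(T,T\cap K,T\cap H,T\cap E)$ is a trifactorised group associated with $I$. Hence $T\cap K=I$ (as a subgroup of $K$), $T\cap E=\eta(D)$, and $T=(T\cap K)(T\cap E)=I\eta(D)$. The identification $T\cap E=\eta(D)$ should follow from $T\cap E=IH\cap E$: an element $\delta(c)\eta(c)$ with $\delta(c)\in I$ lies in $IE$ exactly when $c\in D$. I will verify this from the explicit description of $H$.

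\textbf{The factors.} The isomorphism $KT/T\cong K/(K\cap T)=K/I$ gives the additive group $(B/I,+)$. Next, $\bar\eta(cD)=\eta(c)T$ is well defined because $\eta(D)\le T$, and it is a surjective homomorphism onto $ET/T$. For the factorisation, I will show $KT\cap ET=T$ via Dedekind's law. First, $ET=IE$ and $KT=K\eta(D)$. Since $\eta(D)\le IE$, we get $K\eta(D)\cap IE=(K\cap IE)\eta(D)$. Also $K\cap IE=I(K\cap E)=I$. Hence the intersection is $I\eta(D)=T$. Therefore $G/T=(KT/T)(ET/T)$ with trivial intersection, so $G/T$ is the semidirect product $[KT/T](ET/T)$.

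\textbf{The action and the derivation.} Conjugation by $\eta(c)T$ sends $kT$ to $\lambda_c(k)T$, which is the lambda map of $B/I$. If $\eta(c)\in T$, then $[K,\eta(c)]\le[K,T]\le K\cap T=I$, because both $K$ and $T$ are normal. So $\lambda_c$ induces the identity on $B/I$, which shows $\ker\bar\eta\le\ker\lambda_{B/I}$. The map $\bar\delta(cD)=\delta(c)T$ is well defined, since $\delta(cd)=\delta(c)\lambda_c(\delta(d))$ and $\lambda_c(I)=I\subseteq T$. Under $KT/T\cong B/I$, it is the identity bijection, and the derivation identity passes to the quotient. Finally, $HT/T=\{\delta(c)\eta(c)T\}=\{\bar\delta(cD)\bar\eta(cD)\}$. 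The triple factorisation of $G/T$ and the equality $HT\cap ET=T$ follow from the corresponding facts in $G$, using Dedekind's law as above.

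\textbf{Main obstacle.} The main obstacle is the bookkeeping of the intersections $T\cap E=\eta(D)$ and $KT\cap ET=T$. Everything else is a routine passage to quotients.
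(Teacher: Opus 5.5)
The paper does not prove this statement; it quotes it from \cite{BallesterEstebanPerezAPerezC26-commmathstat-categoriesskewleftbraces}, so there is no in-paper argument to compare with. Your direct verification is correct and complete. The steps are:
\begin{itemize}
\item[(a)] $T=I\eta(D)$;
\item[(b)] $KT\cap ET=K\eta(D)\cap IE=T$ by Dedekind's law;
\item[(c)] conjugation by $\eta(c)T$ on $KT/T\cong K/I$ is $\lambda^{B/I}_{cD}$;
\item[(d)] $\ker\bar\eta$ acts trivially because $[K,T]\le K\cap T=I$;
\item[(e)] $\bar\delta$ is well defined since $\lambda_c(I)=I$, and $HT/T$ is the graph of $\bar\delta$ twisted by $\bar\eta$.
\end{itemize}

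Two wording points need fixing.
\begin{itemize}
\item[(i)] Your sentence justifying $T\cap E=\eta(D)$ is garbled. The correct check is that an element $x\delta(c)\eta(c)$ of $IH$, with $x\in I$, lies in $E$ exactly when $\delta(c)\in I$, that is, $c\in D$. In that case it equals $\eta(c)$.
\item[(ii)] $HT\cap ET=T$ does not follow from $H\cap E=1$ in $G$ alone. It follows from Dedekind's law together with $H\cap IE=\{\delta(d)\eta(d)\mid d\in D\}\le T$, which is the same computation as in (i). More simply, it is automatic once you have identified $G/T$ with $[K/I](ET/T)$ and $HT/T$ with the twisted graph of the bijection $\bar\delta$.
\end{itemize}
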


\begin{lemma}\label{lemma-sub-3fact-quo-ass}
Let $I$ be an ideal of $B$ and $A$ a subbrace of $B$. Let $T_0$ and $T$ be the trifactorised subgroups of $(G,K,H,E)$ associated with $I$ and $A$, respectively. Then the trifactorised subgroup of $(G/T_0,KT_0/T_0,HT_0/T_0,ET_0/T_0)$ associated with the subbrace $AI/I$ of $B/I$ is $TT_0/T_0$. 
\end{lemma}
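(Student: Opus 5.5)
We have to show that the trifactorised subgroup of $\bar G=G/T_0$ associated with $AI/I$ is exactly $TT_0/T_0$. By the first Proposition of this section, applied to $B/I$ and to the trifactorised group of Proposition~\ref{prop-3fact-quo}, that subgroup is
\[
\bar T=\bigl((AI/I)\bar H\bigr)\cap\bigl((AI/I)\bar E\bigr),
\]
where $AI/I$ is viewed inside $KT_0/T_0$ via $\bar\delta$. Concretely, $(AI/I)$ corresponds to $\delta(AI)T_0/T_0=(A+I)T_0/T_0$. We also use $\bar H=HT_0/T_0$ and $\bar E=ET_0/T_0$. So the plan is to compute both intersectands as images in $G/T_0$ and compare them with $TT_0/T_0$.

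\textbf{Step 1: reduce to $A\subseteq K$ written additively.} Since $I$ is an ideal, $AI=A+I$ as sets, and $(A+I)T_0=AT_0$ because $I=T_0\cap K\subseteq T_0$. Hence
\[
\bar T=\bigl(AT_0H/T_0\bigr)\cap\bigl(AT_0E/T_0\bigr).
\]
Normality of $T_0$ gives $AT_0H=AHT_0$ and $AT_0E=AET_0$. So
\[
\bar T=(AHT_0\cap AET_0)/T_0 .
\]

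\textbf{Step 2: the inclusion $TT_0/T_0\subseteq\bar T$.} This is immediate, since $T=AH\cap AE$ is contained in both $AHT_0$ and $AET_0$.

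\textbf{Step 3: the reverse inclusion $AHT_0\cap AET_0\subseteq TT_0$ (main obstacle).} Let $x=ahu=a'ev$ with $a,a'\in A$, $h\in H$, $e\in E$ and $u,v\in T_0$.

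First absorb $T_0$ into the $H$- and $E$-parts. Write $T_0=IH_0=IE_0$ with $H_0=T_0\cap H$ and $E_0=T_0\cap E$; this holds because $T_0$ is trifactorised. Since $K\trianglelefteq G$ and $I\trianglelefteq$ in the relevant sense, we can rewrite $hu=h i h_0=i'hh_0$ and move $i'$ left. This gives
\[
x=(a+i'')h_1,\qquad h_1\in H .
\]
Similarly $x=(a'+j)e_1$ with $e_1\in E$.

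By uniqueness of the factorisation $G=KH$, the element $x$ lies in $(A+I)H$. By uniqueness of $G=KE$, it also lies in $(A+I)E$. Therefore $x$ lies in the trifactorised subgroup of the subbrace $A+I=AI$, namely
\[
T_{AI}=(A+I)H\cap(A+I)E .
\]

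It remains to show $T_{AI}=TT_0$. Since $T,T_0\le T_{AI}$ and $T_0\trianglelefteq G$, we have $TT_0\le T_{AI}$. For the reverse, compare $K$-components: $T_{AI}\cap K=A+I=(T\cap K)(T_0\cap K)\subseteq TT_0$. Then use that $T_{AI}=(T_{AI}\cap K)(T_{AI}\cap H)$, and that each $\delta(c)\eta(c)$ with $c\in AI$ splits as a product of the corresponding elements for $a\in A$ and $i\in I$, using $c=ai$. So $T_{AI}\cap H\subseteq (T\cap H)(T_0\cap H)\subseteq TT_0$.

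The delicate point is handling the $H$-part cleanly. This relies on $H\cap$ the trifactorised subgroups being $\{\delta(c)\eta(c)\}$ over the corresponding multiplicative subgroups, and on $AI$ being the product of these subgroups in $(B,\cdot)$.

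Combining the steps gives $\bar T=TT_0/T_0$, as claimed.
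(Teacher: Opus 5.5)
Your proof is correct, but it takes a different route from the paper's.

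\textbf{The paper's route.} The paper computes no intersection at all. It uses that a trifactorised subgroup is the product of its $K$- and $E$-parts, $T=(T\cap K)(T\cap E)=LM$ with $M=(\eta\circ\delta^{-1})(L)$. Under $\bar\delta$ and $\bar\eta$, the additive and multiplicative groups of $AI/I$ correspond to $LT_0/T_0$ and $MT_0/T_0$. Hence the trifactorised subgroup of $AI/I$ is $(LT_0/T_0)(MT_0/T_0)=LMT_0/T_0=TT_0/T_0$, in one line.

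\textbf{Your route.} You start instead from the defining formula $\bar L\bar H\cap\bar L\bar E$ and lift it to $AHT_0\cap AET_0$. Absorbing $T_0=IH_0=IE_0$, you show this set lies in the trifactorised subgroup $T_{AI}$ of the subbrace $AI=A+I$ of $B$. The vague phrase ``$I\trianglelefteq$ in the relevant sense'' can be made precise: $I=T_0\cap K$ is normal in $G$, being an intersection of two normal subgroups. You then prove $T_{AI}=TT_0$ via the $K$--$H$ factorisation of $T_{AI}$ and the fact that $c\mapsto\delta(c)\eta(c)$ is a homomorphism $C\to H$.

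\textbf{What each buys.} Your argument is longer, but it gives as a by-product an explicit identification: the full preimage in $G$ of the trifactorised subgroup of $AI/I$ is the trifactorised subgroup of $AI$ in $G$. The paper's argument is shorter because it reads off the $E$-component directly through $\bar\eta$, rather than recovering it from an intersection.

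\textbf{One inaccuracy.} In Step~3 you invoke ``uniqueness of the factorisation $G=KH$''. In a generalised trifactorised group, $K\cap H=\{\delta(c)\mid c\in\ker\eta\}$, and this is non-trivial in general. For example, in the small trifactorised group it is $\delta(\ker\lambda)$. So $G=KH$ need not be an exact factorisation. This does not damage your proof, because you had already written $x=(a+i'')h_1$ explicitly. What you actually need is the exactness of $G=KE$, which holds since $K\cap E=1$. It is what gives $T_{AI}\cap K=A+I$ and $T_{AI}\cap H=\{\delta(c)\eta(c)\mid c\in AI\}$.
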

\begin{proof}
Let $L$ be the group of $G$ corresponding to the additive subgroup of $A$ and denote $(\eta\circ \delta^{-1})(L)=M$. The additive and multiplicative subgroups of $AI/I$ correspond, respectively, to the subgroups
\begin{align*}
&\{\bar\delta(c(I,\cdot))\;|\;c\in (A,\cdot)\}=\{\delta(c)T_0\;|\;c\in (A,\cdot)\}=LT_0/T_0,\\
&\{\bar\eta(c(I,\cdot))\;|\;c\in (A,\cdot)\}=\{\eta(c)T_0\;|\;c\in (A,\cdot)\}=MT_0/T_0
\end{align*}
 of $G/T_0$. Hence, the trifactorised subgroup of $G/T_0$ associated with $AI/I$ is $(LT_0/T_0)(MT_0/T_0)=LMT_0/T_0=TT_0/T_0$.
\end{proof}

 We compute the conjugation $\conj{b}a = bab^{-1}$ and the commutator $[a, b] = aba^{-1}b^{-1}$, for $a, b \in G$, in the large trifactorised group $G$ associated with the skew brace $B$. 
 
Let $A$, $A_1$ and $A_2$ be subbraces of $B$. We identify them with the subgroups $L$, $L_1$ and $L_2$ of $K$ in $G$, respectively. If $a \in A_1$ and $b \in A_2$, then $a\ast b$ corresponds to the element $[\eta(\delta^{-1}(a)), b]$ of $G$, which belongs to $K$. Therefore, the additive group of $A_1\ast A_2$ can be identified  as the subgroup $[(\eta\circ\delta^{-1})(L_1),L_2]$ of $G$. Note that $[(\eta\circ\delta^{-1})(L_1),L_2]$ is contained in $K$. By \cite[Lemma 5.3]{BallesterEstebanPerezAPerezC26-commmathstat-categoriesskewleftbraces}, $(\eta\circ\delta^{-1})(L_1)=L_1H\cap E$. Therefore, we have

\begin{prop}\label{prop-add-gr-deriv}
The subgroup $A_1\ast A_2$ of $K$ corresponds to the subgroup $[L_1H\cap E,L_2]$ of $G$. In particular, the additive group of $B\ast B$ corresponds to the normal subgroup $[E,K]$ of $G$.
\end{prop}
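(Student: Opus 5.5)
The statement to prove is Proposition~\ref{prop-add-gr-deriv}. It is essentially the computation sketched just before it, so the plan is to make that computation precise in the large trifactorised group (and, more generally, in any trifactorised group $(G,K,H,E)$ associated with $B$).

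\textbf{Step 1: a single star product is a commutator.} Take $a\in A_1$ and $b\in A_2$, and regard $b$ as an element of $K$. Put $e=\eta(\delta^{-1}(a))\in E$. Conjugation by $e$ on $K$ is $\bar\lambda(e)=\lambda_a$, so in $G$
\[
[e,b]=e\,b\,e^{-1}\,b^{-1}=\lambda_a(b)-b ,
\]
written additively inside $K$. Since $\lambda_a(b)=-a+ab$, this equals $-a+ab-b=a*b$. In particular $[e,b]\in K$, because $K$ is normal in $G$.

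\textbf{Step 2: pass to generated subgroups.} As $a$ runs over $A_1$, the element $\eta(\delta^{-1}(a))$ runs over $M=(\eta\circ\delta^{-1})(L_1)$. By \cite[Lemma~5.3]{BallesterEstebanPerezAPerezC26-commmathstat-categoriesskewleftbraces}, $M=L_1H\cap E$. The subgroup $[M,L_2]$ is generated by the commutators $[m,l]$ with $m\in M$ and $l\in L_2$. By Step~1, these are exactly the elements $a*b$. So $[M,L_2]$ coincides with the subgroup of $K$ generated by $\{a*b\mid a\in A_1,\ b\in A_2\}$, which is $A_1*A_2$.

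One subtlety needs checking here. The standard fact that $[X,Y]$ is generated by the $[x,y]$ is just the definition, so we do not need $M$ to normalise $L_2$. We only need that the group generated in $G$ by these commutators sits inside $K$ with the group operation of $K$, and this holds because $K\le G$.

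\textbf{Step 3: the special case.} Take $A_1=A_2=B$, so that $L_1=L_2=K$. Then $KH\cap E=G\cap E=E$, and the additive group of $B*B$ corresponds to $[E,K]$. This subgroup is normal in $G=KE$ for two reasons:
\begin{itemize}
\item $E$ normalises $[E,K]$, since $K\trianglelefteq G$ and the commutator identities give $[E,K]^E\subseteq[E,K]$;
\item $K$ normalises $[E,K]$, via the identity $[e,k]^{k'}=[e,k']^{-1}[e,kk']$ (up to convention), which keeps us inside $[E,K]$.
\end{itemize}
Equivalently, $[E,K]$ is normalised by both $E$ and $K$, hence by $G=\langle E,K\rangle$.

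The only point requiring care is matching the commutator convention $[a,b]=aba^{-1}b^{-1}$ with the sign order in $a*b=-a+ab-b$, and translating between additive notation in $K$ and multiplication in $G$.
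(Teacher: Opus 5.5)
Your proposal is correct and is essentially the paper's own argument: identify each $a*b$ with the commutator $[\eta(\delta^{-1}(a)),b]\in K$, pass to the generated subgroups, and use Lemma~5.3 of the cited categories paper to rewrite $(\eta\circ\delta^{-1})(L_1)$ as $L_1H\cap E$. Your only addition is the explicit check that $[E,K]$ is normal, which the paper leaves implicit as the standard fact $[X,Y]\trianglelefteq\langle X,Y\rangle$; note that invariance under $E$ follows from the identity $[xy,k]={}^{x}[y,k]\,[x,k]$, and not from $K\trianglelefteq G$.
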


Let us identify other important substructures with subgroups of $K$ or~$E$.
\begin{prop}\label{prop-sub-in-3fact}
\begin{enumerate}
\item The additive group of $[B,B]^B$ corresponds to the subgroup $K'[E,K]$ of $G$ and the multiplicative subgroup of $[B,B]^B$ correspond to the subgroup $E'([E,K]H\cap E)$ of $G$.\label{en-prop-sub-in-3-fact-1}
\item The additive group of $\Fix(B)$ corresponds to the subgroup $\Cent_K(E)$ of~$G$.\label{en-prop-sub-in-3-fact-2}
\item The multiplicative group of $\ker\lambda$ corresponds to the subgroup $\Cent_E(K)$ of~$G$.\label{en-prop-sub-in-3-fact-3}
\end{enumerate}
\end{prop}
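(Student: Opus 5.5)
We work in the large trifactorised group $G=KE$ associated with $B$. Here $\eta=\id_C$, $\delta$ is the identity map $C\to K$, and $H=\{\delta(c)c\mid c\in C\}$. Conjugation by $E$ on $K$ is the lambda action. We prove the three parts in the order (3), (2), (1), since (1) uses part of the reasoning of (3).

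For part~(\ref{en-prop-sub-in-3-fact-3}), take $c\in C$. Then $c\in\ker\lambda$ if and only if $\lambda_c(k)=k$ for all $k\in K$. In $G$, $\lambda_c(k)=ckc^{-1}$, so this holds if and only if $c$, viewed as an element of $E$, centralises $K$. Hence the multiplicative group of $\ker\lambda$ is exactly $\Cent_E(K)$.

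For part~(\ref{en-prop-sub-in-3-fact-2}), take $b\in K$. Then $b\in\Fix(B)$ if and only if $ebe^{-1}=\lambda_e(b)=b$ for all $e\in E$. This says precisely that $b\in\Cent_K(E)$.

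For part~(\ref{en-prop-sub-in-3-fact-1}), we use the equalities $[B,B]^B=B\ast B+[B,B]_+=(B\ast B)[B,B]_\bullet$ from \cite[Proposition~3.3]{BallesterEstebanPerezA26-jalg-prod-ab}.
\begin{enumerate}
\item By Proposition~\ref{prop-add-gr-deriv}, the additive group of $B\ast B$ is $[E,K]$, which is normal in $G$.
\item The subgroup $[B,B]_+$ is the derived subgroup $K'$ of $K$.
\item Since $[E,K]$ is normal, $K'[E,K]$ is a subgroup, and it is the additive group of $[B,B]^B$.
\end{enumerate}

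For the multiplicative group, $[B,B]^B=(B\ast B)[B,B]_\bullet$ is a product of subgroups of $C$. Its image under $\eta=\id$ is therefore the product of the image of $(B\ast B,\cdot)$ and of $C'=E'$. It remains to identify the multiplicative group of the subbrace $B\ast B$ inside $E$. By \cite[Lemma~5.3]{BallesterEstebanPerezAPerezC26-commmathstat-categoriesskewleftbraces}, a subbrace with additive group $L$ has multiplicative group $(\eta\circ\delta^{-1})(L)=LH\cap E$. Taking $L=[E,K]$ gives $[E,K]H\cap E$. Hence the multiplicative group of $[B,B]^B$ is $E'([E,K]H\cap E)$.

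The main obstacle is checking that this product is really a subgroup and matches the product decomposition of $[B,B]^B$. This holds because $B\ast B$ is an ideal, so its multiplicative group is normal in $C$, and its image $[E,K]H\cap E$ is therefore normal in $E$.
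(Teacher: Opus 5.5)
Your proof is correct and follows the paper's argument essentially step by step. Parts~2 and~3 come from reading the lambda action as conjugation by $E$, and part~1 from transporting the decompositions $[B,B]^B=B\ast B+[B,B]_+=(B\ast B)[B,B]_\bullet$ via Proposition~\ref{prop-add-gr-deriv} and \cite[Lemma~5.3]{BallesterEstebanPerezAPerezC26-commmathstat-categoriesskewleftbraces}. The one difference is that you fix $\eta=\id_C$, whereas the paper argues through $\bar\lambda$ for a general epimorphism $\eta$ and later applies the result to the small trifactorised group. Your argument carries over verbatim, since $\bar\lambda(\eta(c))=\lambda_c$ and surjectivity of $\eta$ gives $\eta(C')=E'$.
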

\begin{proof}
We have that $[B,B]^B=B\ast B+[B,B]_+=(B\ast B)[B,B]_\bullet$. By Proposition~\ref{prop-add-gr-deriv}, the additive group of $B\ast B$ corresponds to the subgroup $[E,K]$ of $G$ and, by \cite[Lemma 5.3]{BallesterEstebanPerezAPerezC26-commmathstat-categoriesskewleftbraces}, the multiplicative group of $B\ast B$ correspond to the subgroup $(\eta\circ\delta^{-1})([E,K])=[E,K]H\cap E$ of $G$. It is clear that the additive group of $[B,B]_+$ corresponds to the subgroup $K'$ and the multiplicative group of $[B,B]_\bullet$ corresponds to $E'$. Therefore, Statement~\ref{en-prop-sub-in-3-fact-1} follows.

Since $\bar\lambda$ corresponds to the conjugation of the elements of $K$ by the elements of $E$, the elements of $\Fix(B)$ are the elements of $K$ that commute with all elements of $E$. In addition, the elements of $\ker\lambda$ are the elements of $E$ that commute with all elements of $K$. Hence, Statements~\ref{en-prop-sub-in-3-fact-2} and~\ref{en-prop-sub-in-3-fact-3} follow.\qedhere
\end{proof}

\section{Central series and trifactorised groups}
Let $B$ be a skew brace and consider a trifactorised subgroup $(G,K,H,E)$ associated with $B$. The objective of this section is to study the relationship between the trifactorised subgroups associated with $\centre_n(B)$ and $\Gamma_n(B)$ with $\centre_n(G)$ and $\gamma_{n}(G)$, respectively. Let us denote by $T_n^Z$ and $T_n^\Gamma$ to the trifactorised subgroups associated with $\centre_n(B)$ and $\Gamma_n(B)$, respectively.

\begin{lemma}\label{lemma-3fact-gamma-fact}
Denote by $\gamma_{n+1}^K(G)=\gamma_{n+1}(G)\cap K$. Then
\begin{align*}
\gamma_{n+1}(G)&=\gamma_{n+1}^K(G)\gamma_{n+1}(E),\\
\gamma_{n+1}^K(G)&=[\gamma_n^K(G), K][\gamma_n^K(G), E][\gamma_n(E), K]
\end{align*}
for $n\geq 1$.
\end{lemma}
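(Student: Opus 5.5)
We argue by induction on $n$, working in $G=KE$ with $K\trianglelefteq G$ and $K\cap E=1$. Throughout we use the standard commutator identities in $G$: $[xy,z]={}^{x}[y,z]\,[x,z]$ and $[x,yz]=[x,y]\,{}^{y}[x,z]$. We also use two facts about products of subgroups. First, if $N\trianglelefteq G$ and $X,Y\le G$, then $[NX,Y]=[N,Y]^{X}[X,Y]$, so when $[N,Y]$ is normalised by $X$ (for instance, normal in $G$) we get $[NX,Y]=[N,Y][X,Y]$. Second, a commutator subgroup $[X,Y]$ is normalised by $X$ and by $Y$. The symmetric form $\gamma_{n+1}^K(G)=[\gamma_n^K(G),K][\gamma_n^K(G),E][\gamma_n(E),K]$ suggests computing $[\gamma_n(G),G]=[\gamma_n^K(G)\gamma_n(E),KE]$ and splitting into four pieces.

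For $n=1$ we have $\gamma_1^K(G)=K$ and $\gamma_1(E)=E$. Then $G'=[KE,KE]$ is generated modulo normality by $K'$, $[K,E]$ and $E'$. Both $K'$ and $[K,E]$ are normal in $G$: $K'$ is characteristic in $K$, and $[K,E]$ is normalised by $E$ and by $K$ (since $[K,E]\le K$ and $K$ normalises $[K,E]$). So $G'=K'[K,E]E'$. Since $K'[K,E]\le K$ and $E'\le E$, Dedekind's law gives $G'\cap K=K'[K,E](E'\cap K)=K'[K,E]=[K,K][K,E][E,K]$. This is the claimed formula, and also $G'=\gamma_2^K(G)\gamma_2(E)$.

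For the inductive step, assume $\gamma_n(G)=\gamma_n^K(G)\gamma_n(E)$, where $N:=\gamma_n^K(G)=\gamma_n(G)\cap K$ is normal in $G$. Put $M:=[N,K][N,E][\gamma_n(E),K]$. Each factor is a subgroup of $K$, since $K\trianglelefteq G$, and we need $M\trianglelefteq G$.

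The first two factors together equal $[N,G]$, which is normal because $N$ is. For the third, I will show that $[\gamma_n(E),K]$ is normalised by $E$ and by $K$. It is normalised by $K$ because $\gamma_n(E)$ normalises it and $[\gamma_n(E),K]$ is a commutator subgroup containing the relevant elements. It is normalised by $E$ because $E$ normalises both $\gamma_n(E)$ and $K$.

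Next we expand $\gamma_{n+1}(G)=[N\gamma_n(E),KE]$. This is the normal closure generated by $[N,K]$, $[N,E]$, $[\gamma_n(E),K]$ and $[\gamma_n(E),E]=\gamma_{n+1}(E)$. Since $M$ is normal, it follows that $\gamma_{n+1}(G)=M\gamma_{n+1}(E)$. Note that $M\gamma_{n+1}(E)$ is a subgroup because $M\trianglelefteq G$. It is normal because it is $G$-invariant: conjugating $\gamma_{n+1}(E)$ by $K$ only adds elements of $[\gamma_{n+1}(E),K]\le[\gamma_n(E),K]\le M$. Finally, since $M\le K$ and $\gamma_{n+1}(E)\cap K=1$, Dedekind's law gives $\gamma_{n+1}(G)\cap K=M$, which completes the induction.

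The main obstacle is the bookkeeping that shows the pieces are normal. In particular, conjugates of elements of $\gamma_{n+1}(E)$ by $K$ must land in $M\gamma_{n+1}(E)$, and the product $M$ must be normal and not merely a set. I would handle this uniformly with the identity ${}^{k}x=[k,x]\,x$, and use $[\gamma_{n+1}(E),K]\subseteq[\gamma_n(E),K]$.
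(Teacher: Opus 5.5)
Your proof is correct and follows the paper's argument: expand $\gamma_{n+1}(G)=[\gamma_n^K(G)\gamma_n(E),KE]$ into the four commutator pieces $[\gamma_n^K(G),K]$, $[\gamma_n^K(G),E]$, $[\gamma_n(E),K]$, $[\gamma_n(E),E]$, then apply Dedekind's law using $\gamma_{n+1}(E)\cap K=1$. The only difference is that you check the normality bookkeeping by hand, namely that $M$ and $M\gamma_{n+1}(E)$ are normal, via ${}^{k}x=[k,x]x$ and $[\gamma_{n+1}(E),K]\le[\gamma_n(E),K]$; the paper instead obtains the four-factor splitting by citing Huppert, Kapitel~III, Hilfssatz~1.10.
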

\begin{proof}
For $n=1$ the result is obvious since $G'=K'[E,K]E'$, so let us suppose it is true for $n$. By definition,
\[\gamma_{n+2}(G)=[\gamma_{n+1}(G),G]=[\gamma_{n+1}^K(G)\gamma_{n+1}(E), KE].\]
 Since $\gamma_{n+1}(E)\leq \Norm_G(KE)\cap \Norm_G(\gamma_{n+1}^K(G))$ and $E\leq \Norm_G(K)\cap \Norm_G(\gamma_{n+1}(E))\cap \Norm_G(\gamma_{n+1}^K(G))$, by \cite[Kapitel III, Hilfssatz~1.10]{Huppert67} it follows that
\begin{align*}
[\gamma_{n+1}^K(G)\gamma_{n+1}(E), KE]&=[\gamma_{n+1}^K(G), KE][\gamma_{n+1}(E), KE]\\&=[\gamma_{n+1}^K(G), K][\gamma_{n+1}^K(G), E][\gamma_{n+1}(E), K][\gamma_{n+1}(E), E].
\end{align*}
Hence, by Dedekind's identity, the result follows.\qedhere
\end{proof}

\begin{lemma}\label{lemma-deriv-3fact}
$T_2^\Gamma$ contains $G'$ and $T_2^\Gamma\cap K=G'\cap K$.
\end{lemma}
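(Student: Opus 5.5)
We need to show that $T_2^\Gamma$ contains $G'$ and that $T_2^\Gamma\cap K=G'\cap K$, where $T_2^\Gamma$ is the trifactorised subgroup attached to $\Gamma_2(B)=[B,B]^B$. The plan is to describe both subgroups explicitly in terms of $K$, $E$ and $H$, and then compare them.

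First, the structure of $T_2^\Gamma$. By the first proposition of Section~3, since $[B,B]^B$ is an ideal, $T_2^\Gamma$ is normal in $G$. Moreover, $(T_2^\Gamma, T_2^\Gamma\cap K, T_2^\Gamma\cap H, T_2^\Gamma\cap E)$ is a trifactorised group associated with $[B,B]^B$. Hence $T_2^\Gamma=(T_2^\Gamma\cap K)(T_2^\Gamma\cap E)$. By Proposition~\ref{prop-sub-in-3fact}\,(\ref{en-prop-sub-in-3-fact-1}), applied to the large trifactorised group where $\eta$ is the identity, the two factors are
\[
T_2^\Gamma\cap K=K'[E,K],\qquad T_2^\Gamma\cap E=E'([E,K]H\cap E).
\]
For a general $\eta$, I will assume that the same identification passes through $\eta$. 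That is, the multiplicative part is the image of $(B,\cdot)'(B\ast B,\cdot)$, which equals $E'\,(\eta\circ\delta^{-1})([E,K])$, and \cite[Lemma~5.3]{BallesterEstebanPerezAPerezC26-commmathstat-categoriesskewleftbraces} identifies this with $E'([E,K]H\cap E)$.

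Second, the structure of $G'$. Lemma~\ref{lemma-3fact-gamma-fact} with $n=1$, or directly $G=KE$ with $K$ normal, gives $G'=K'[E,K]E'$. Since $K'[E,K]\le K$, $E'\le E$ and $K\cap E=1$, Dedekind's identity yields $G'\cap K=K'[E,K]$.

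With these two descriptions the statement follows quickly:
\[
G'=K'[E,K]E'\le (T_2^\Gamma\cap K)(T_2^\Gamma\cap E)=T_2^\Gamma,
\]
and
\[
T_2^\Gamma\cap K=K'[E,K]=G'\cap K .
\]
I would write the equality of the $K$-parts as the final line.

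The main obstacle is justifying that the trifactorised subgroup equals $(T\cap K)(T\cap E)$ with exactly those factors, for an arbitrary $\eta$. In particular, one must check that the multiplicative group of $[B,B]^B$ corresponds to $E'([E,K]H\cap E)$ in $E$, using $\eta$ being an epimorphism so that $\eta((B,\cdot)')=E'$. Once that is settled, the inclusion $E'\le T_2^\Gamma$ is immediate, and the rest is Dedekind's identity.
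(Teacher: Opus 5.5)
Your proposal is correct and is essentially the paper's proof: write $T_2^\Gamma=K'[E,K]\,E'([E,K]H\cap E)$ using Proposition~\ref{prop-sub-in-3fact}, note that $G'=K'[E,K]E'\le T_2^\Gamma$, and apply Dedekind's identity to get $T_2^\Gamma\cap K=K'[E,K]=G'\cap K$. The point you flag as the main obstacle (arbitrary $\eta$) is not an extra assumption, because Proposition~\ref{prop-sub-in-3fact} is already stated and proved for an arbitrary epimorphism $\eta$: its proof uses $\eta\circ\delta^{-1}$ and the surjectivity of $\eta$ to obtain $E'$, exactly as you sketch.
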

\begin{proof}
By Proposition~\ref{prop-sub-in-3fact}, the trifactorised subgroup associated with $\Gamma_2(B)=[B,B]^B$ is $T_2^\Gamma=K'[E,K]E'([E,K]H\cap E)$, therefore, $T_2^\Gamma$ contains $K'[E,K]E'$, which is equal to $G'$. Moreover, $T_2^\Gamma\cap K=K'[E,K]=G'\cap K$, by Dedekind's identity.\qedhere
\end{proof}

\begin{teo}\label{teo-central-ser-3fact}
\begin{enumerate}
\item $T_n^Z$ is contained in $\centre_n(G)$ for $n\geq 1$.
\item $T_{n+1}^\Gamma$ contains $\gamma_{n+1}(G)$ for $n\geq 1$.
\end{enumerate}
\end{teo}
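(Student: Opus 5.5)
Both statements will be proved by induction on $n$. The base cases come from describing the centre of $B$, and the derived ideal, inside $G$. The inductive steps then pass to quotients by means of Proposition~\ref{prop-3fact-quo} and Lemma~\ref{lemma-sub-3fact-quo-ass}.

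\emph{Statement (1).} First I treat $n=1$. Let $z\in\centre(B)$. Additively, $z$ corresponds to $\delta(z)\in K$, and $\delta(z)$ lies in $\centre(K)$ because $z\in\centre(B,+)$. It also lies in $\Cent_K(E)=\Fix(B)$ by Proposition~\ref{prop-sub-in-3fact}. Hence $\delta(z)$ commutes with $K$ and with $E$, so $\delta(z)\in\centre(G)$. Multiplicatively, $z$ corresponds to $\eta(z)\in E$. Since $z\in\ker\lambda$, we have $\eta(z)\in\Cent_E(K)$. Since $z\in\centre(B,\cdot)$ and $\eta$ is an epimorphism, $\eta(z)\in\centre(E)$. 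Thus $\eta(z)\in\centre(G)$ as well. The trifactorised subgroup $T_1^Z$ is generated by the subgroups $\delta(\centre(B))$ and $\eta(\centre(B))$ of $K$ and $E$, as in the proof of Lemma~\ref{lemma-sub-3fact-quo-ass}, so $T_1^Z\le\centre(G)$.

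For the inductive step, assume $T_n^Z\le\centre_n(G)$. By Proposition~\ref{prop-3fact-quo}, $(G/T_n^Z,KT_n^Z/T_n^Z,\dots)$ is a trifactorised group associated with $B/\centre_n(B)$. By Lemma~\ref{lemma-sub-3fact-quo-ass}, the trifactorised subgroup of this quotient associated with $\centre(B/\centre_n(B))=\centre_{n+1}(B)/\centre_n(B)$ is $T_{n+1}^ZT_n^Z/T_n^Z$. Applying the case $n=1$ to the quotient skew brace gives $T_{n+1}^Z/T_n^Z\le\centre(G/T_n^Z)$, that is, $[T_{n+1}^Z,G]\le T_n^Z\le\centre_n(G)$. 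Therefore $T_{n+1}^Z\le\centre_{n+1}(G)$.

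\emph{Statement (2).} The case $n=1$ is exactly Lemma~\ref{lemma-deriv-3fact}. Assume $\gamma_{n+1}(G)\le T_{n+1}^\Gamma$. Then
\[\gamma_{n+2}(G)=[\gamma_{n+1}(G),G]\le[T_{n+1}^\Gamma,G],\]
so it suffices to prove $[T_{n+1}^\Gamma,G]\le T_{n+2}^\Gamma$. Pass to $\bar G=G/T_{n+2}^\Gamma$, which by Proposition~\ref{prop-3fact-quo} is a trifactorised group associated with $\bar B=B/\Gamma_{n+2}(B)$. By Lemma~\ref{lemma-sub-3fact-quo-ass}, the image of $T_{n+1}^\Gamma$ in $\bar G$ is the trifactorised subgroup associated with the ideal $\Gamma_{n+1}(B)/\Gamma_{n+2}(B)$ of $\bar B$. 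This ideal is central in $\bar B$, because $[\Gamma_{n+1}(B),B]^B=\Gamma_{n+2}(B)$ and an ideal $I$ with $[I,B]^B=0$ is contained in $\centre(B)$. To check that last claim: $I\ast B=0$ and $[I,B]_+=0$ give $I\subseteq\centre(B,+)$ and $I\subseteq\ker\lambda$; $B\ast I=0$ gives $I\subseteq\Fix(B)$. Applying Statement (1) with $n=1$ to $\bar B$, the image of $T_{n+1}^\Gamma$ is contained in the image of $T_1^Z(\bar B)$, which lies in $\centre(\bar G)$. Hence $[T_{n+1}^\Gamma,G]\le T_{n+2}^\Gamma$, and the induction closes.

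The main obstacle I expect is the bookkeeping of identifications in the quotient. One has to check that the trifactorised subgroup of $\bar G$ associated with an ideal is monotone, so that $I_1\subseteq I_2$ implies $T_{I_1}\le T_{I_2}$. One also has to make sure that the identification of $\centre(B)$ inside $G$ is valid for a general epimorphism $\eta$, not just the large one; this is where surjectivity of $\eta$ is used to get $\eta(z)\in\centre(E)$.
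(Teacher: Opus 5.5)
Your proof is correct. Part (1) is essentially the paper's argument: the base case goes through $\centre(K)\cap\Cent_K(E)$ and $\centre(E)\cap\Cent_E(K)$, and the inductive step applies Lemma~\ref{lemma-sub-3fact-quo-ass} in $G/T_n^Z$ to get an ascending central series.

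Part (2), however, follows a genuinely different route.

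\textbf{The paper's route.} It uses Lemma~\ref{lemma-3fact-gamma-fact} to split $\gamma_{n+1}(G)=\gamma_{n+1}^K(G)\gamma_{n+1}(E)$ into the four pieces $[\gamma_n^K(G),K]$, $[\gamma_n^K(G),E]$, $[\gamma_n(E),K]$ and $[\gamma_n(E),E]$. Via Proposition~\ref{prop-add-gr-deriv}, it matches these one by one with the additive groups of $[\Gamma_n(B),B]_+$, $B\ast\Gamma_n(B)$ and $\Gamma_n(B)\ast B$, and with the multiplicative group of $[\Gamma_n(B),B]_\bullet$. All of these lie in $\Gamma_{n+1}(B)$.

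\textbf{Your route.} You observe that $\Gamma_{n+1}(B)/\Gamma_{n+2}(B)$ is central in $B/\Gamma_{n+2}(B)$; this is legitimate because the generators $i\ast b$, $b\ast i$, $[i,b]_+$ of $[\Gamma_{n+1}(B),B]^B$ vanish in the quotient. Feeding this into the case $n=1$ of (1) for $G/T_{n+2}^\Gamma$ gives $[T_{n+1}^\Gamma,G]\le T_{n+2}^\Gamma$. So $\{T_n^\Gamma\}$ is a descending central series of $G$, and the containment is the usual minimality of the lower central series, exactly dual to the argument for (1).

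\textbf{What each route buys.} Yours avoids Lemma~\ref{lemma-3fact-gamma-fact} and the generator-by-generator bookkeeping. It even absorbs the base case: $T_1^\Gamma=G$, and $B/[B,B]^B$ is central in itself, so Lemma~\ref{lemma-deriv-3fact} is not needed for the containment. The monotonicity you worried about is immediate from $T_A=(T_A\cap K)(T_A\cap E)$, since both factors increase with $A$. The paper's route instead gives a component-wise picture: it records which pieces of $\gamma_{n+1}(G)\cap K$ come from which generators of $\Gamma_{n+1}(B)$. That same bookkeeping is reused in Proposition~\ref{prop-des-cent-cap-K-eq} to prove the reverse inclusion $T_{n+1}^\Gamma\cap K\le\gamma_{n+1}(G)$ under the extra hypothesis.
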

\begin{proof}
\begin{enumerate}
\item Since $\centre(B)\leq \centre(B,+)\cap \Fix(B)$, by Proposition~\ref{prop-sub-in-3fact}, it follows that $T_1^Z\cap K\leq \centre(K)\cap \Cent_K(E)=\Cent_K(G)\leq \centre(G)$. On the other hand, since $\centre(B)\leq \ker\lambda\cap \centre(B,\cdot)$, by Proposition~\ref{prop-sub-in-3fact}, it follows that $T_1^Z\cap E\leq \centre(E)\cap \Cent_E(K)=\Cent_E(G)\leq \centre(G)$. Therefore,
  \[T_1^Z=(T_1^Z\cap K)(T_1^Z\cap E)\leq \centre(G).\]
Thus, the result is true for $n=1$.

By Lemma~\ref{lemma-sub-3fact-quo-ass} the trifactorised subgroup of \[(G/T_n^Z, KT_n^Z/T_n^Z,HT_n^Z/T_n^Z,ET_n^Z/T_n^Z)\] associated with the ideal $\centre_{n+1}(B)/{\centre_n(B)}=\centre(B/{\centre_n(B)})$ is $T_{n+1}^Z/T_n^Z$ is. Applying the case for $n=1$, we have that $T_{n+1}^Z/T_n^Z\leq \centre(G/T_n^Z)$, hence $\{T_n^Z\}_{n\in\N}$ is an ascending central series of $G$, hence, by \cite[Kapitel III, Satz~2.7]{Huppert67}, $T_n^Z\leq \centre_n(G)$.

\item  By Lemma~\ref{lemma-deriv-3fact}, the result it is true for $n=1$. Suppose that it is true for $n$. By definition and \cite[Theorem~3.2]{BallesterEstebanFerraraPerezCTrombetti25-pacificjm-cent-nilp}, $\Gamma_{n+1}(G)$ contains $[\Gamma_{n}(B),B]_+$, $[\Gamma_{n}(B),B]_\bullet$, $\Gamma_{n}(B)\ast B$ and $B\ast \Gamma_{n}(B)$.

The additive subgroups of $[\Gamma_{n}(B),B]_+$, $\Gamma_{n}(B)\ast B$, and $B\ast \Gamma_{n}(B)$ correspond to the subgroups $[T_{n}^\Gamma\cap K,K]$, $[T_{n}^\Gamma\cap K, E]$, $[T_{n}^\Gamma\cap E, K]$ of $K\le G$, respectively, the last two ones follow from Proposition~\ref{prop-add-gr-deriv}. The multiplicative subgroup of  $[\Gamma_{n}(B),B]_\bullet$ corresponds to the subgroup $[T_n^\Gamma\cap E, E]$ of $E\le G$. Therefore, $T_{n+1}^\Gamma\cap K$ contains $[T_{n}^\Gamma\cap K,K]$, $[T_{n}^\Gamma\cap K, E]$, $[T_{n}^\Gamma\cap E, K]$ and $T_{n+1}^\Gamma\cap E$ contains $[T_{n}^\Gamma\cap E, E]$.
 
By Lemma~\ref{lemma-3fact-gamma-fact}, $T_n^\Gamma$ contains $\gamma_n(G)=\gamma_n^K(G)\gamma_n(E)$, hence, $T_{n}^\Gamma\cap K$ and $T_{n}^\Gamma\cap E$ contain $\gamma_{n}^K(G)$ and $\gamma_{n}(E)$, respectively. Therefore, by Lemma~\ref{lemma-3fact-gamma-fact}, $T_{n+1}^\Gamma$ contains
\begin{align*}
&[T_{n}^\Gamma\cap K,K][T_{n}^\Gamma\cap K, E][T_{n}^\Gamma\cap E, K][T_n^\Gamma\cap E,E]\\&\geq [\gamma_n^K(G),K][\gamma_n^K(G), E][\gamma_n(E), K][\gamma_n(E),E]=\gamma_{n+1}^K(G)\gamma_{n+1}(E)\\&=\gamma_{n+1}(G).\qedhere
\end{align*}
\end{enumerate}
\end{proof}

%

\begin{prop}\label{prop-des-cent-cap-K-eq}
If $n=1$ or $[B,B]^B$ is contained in $\ker\lambda$, then $\gamma_{n+1}(G)\cap K=T_{n+1}^\Gamma\cap K$.
\end{prop}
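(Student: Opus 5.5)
We already have one inclusion: by Theorem~\ref{teo-central-ser-3fact}, $\gamma_{n+1}(G)\le T_{n+1}^\Gamma$, so $\gamma_{n+1}(G)\cap K\le T_{n+1}^\Gamma\cap K$. It therefore suffices to prove the reverse inclusion $T_{n+1}^\Gamma\cap K\le \gamma_{n+1}^K(G)=\gamma_{n+1}(G)\cap K$. For $n=1$ this is exactly Lemma~\ref{lemma-deriv-3fact}. So assume $[B,B]^B\subseteq\ker\lambda$ and argue by induction on $n$, taking $n=1$ as the base case. The inductive hypothesis is that the additive group of $\Gamma_n(B)$ is $T_n^\Gamma\cap K=\gamma_n^K(G)$.

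\emph{Step 1: $L=\gamma_{n+1}^K(G)$ is the additive group of an ideal of $B$.} Since $\gamma_{n+1}(G)$ and $K$ are normal in $G$, the subgroup $L$ is normal in $G$. Hence $L$ is a normal subgroup of $K$ and is invariant under conjugation by $E$, that is, $\lambda$-invariant.
\begin{itemize}
\item $\lambda$-invariance gives $ab=a+\lambda_a(b)\in L$ for $a,b\in L$, and similarly for inverses. So $L$ is a subbrace.
\item $\lambda$-invariance also gives $B\ast L\subseteq L$, using $a\ast b=\lambda_a(b)-b$. So $L$ is a strong left ideal.
\end{itemize}
For the right ideal condition, Lemma~\ref{lemma-deriv-3fact} gives $L\le\gamma_2(G)\cap K=T_2^\Gamma\cap K$, so every $x\in L$ lies in $[B,B]^B\subseteq\ker\lambda$. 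Then $x\ast b=-x+x+\lambda_x(b)-b=0$ for all $b\in B$. Thus $L\ast B=0\subseteq L$, and $L$ is an ideal.

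\emph{Step 2: $\Gamma_{n+1}(B)\subseteq L$.} Since $\Gamma_{n+1}(B)=\langle \Gamma_n(B)\ast B+B\ast\Gamma_n(B)+[\Gamma_n(B),B]_+\rangle^B$ and $L$ is an ideal, it is enough to place the three generating sets inside $L$. By Proposition~\ref{prop-add-gr-deriv} and the induction hypothesis, they correspond respectively to $[\gamma_n^K(G),E]$, $[T_n^\Gamma\cap E,K]$ and $[\gamma_n^K(G),K]$.
\begin{itemize}
\item The first and third lie in $\gamma_{n+1}^K(G)$ by Lemma~\ref{lemma-3fact-gamma-fact}.
\item For the second, $T_n^\Gamma\cap E\le T_2^\Gamma\cap E$, which corresponds to the multiplicative group of $\Gamma_n(B)\subseteq[B,B]^B\subseteq\ker\lambda$. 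By Proposition~\ref{prop-sub-in-3fact}, this group is contained in $\Cent_E(K)$, so $[T_n^\Gamma\cap E,K]=1$.
\end{itemize}
Hence $T_{n+1}^\Gamma\cap K\le L$, which closes the induction.

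The main obstacle I expect is Step~2's term $B\ast\Gamma_n(B)$, i.e.\ $[T_n^\Gamma\cap E,K]$. Only $\gamma_n(E)\le T_n^\Gamma\cap E$ is available in general, and $T_n^\Gamma\cap E$ may be strictly larger. This is exactly where the hypothesis $[B,B]^B\subseteq\ker\lambda$ is needed: it kills this commutator outright, and it is also what makes $L$ a right ideal in Step~1.
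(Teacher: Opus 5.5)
Your proof is correct and follows the paper's induction: the base case comes from Lemma~\ref{lemma-deriv-3fact}, the hypothesis $[B,B]^B\subseteq\ker\lambda$ kills the term $[T_n^\Gamma\cap E,K]$, the remaining generators $[E,\gamma_n^K(G)]$ and $[\gamma_n^K(G),K]$ fall into $\gamma_{n+1}(G)$, and Theorem~\ref{teo-central-ser-3fact} gives the reverse inclusion. Your Step~1, showing that $\gamma_{n+1}(G)\cap K$ is itself an ideal of $B$, is a welcome extra, because the paper drops the ideal closure $\langle\cdot\rangle^B$ without comment when it writes $\Gamma_{n+2}(B)$ as a plain sum. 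That is legitimate, since for an ideal $I$ the subgroup generated by $I\ast B$, $B\ast I$ and $[I,B]_+$ is already an ideal, but it is not justified there. One harmless labelling slip: by Proposition~\ref{prop-add-gr-deriv}, $[T_n^\Gamma\cap E,K]$ corresponds to $\Gamma_n(B)\ast B$ and $[E,\gamma_n^K(G)]$ to $B\ast\Gamma_n(B)$, so the term annihilated by $\ker\lambda$ is $\Gamma_n(B)\ast B$, not $B\ast\Gamma_n(B)$ as you say in Step~2 and in your closing remark; the subgroups you actually compute are the right ones.
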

\begin{proof}
By Lemma~\ref{lemma-deriv-3fact}, the result is true for $n=1$. Suppose it is true for $n$. Since $\Gamma_2(B)\subseteq\ker\lambda$ it follows that $\Gamma_{n+1}(B)\ast B=0$ for $n\geq 1$. Then 
\begin{align*}
\Gamma_{n+2}(B)&=\Gamma_{n+1}(B)\ast B+B\ast \Gamma_{n+1}(B)+[\Gamma_{n+1}(B), B]_+\\&=B\ast \Gamma_{n+1}(B)+[\Gamma_{n+1}(B), B]_+.
\end{align*}
Therefore, by Proposition~\ref{prop-add-gr-deriv}, the additive subgroup of $\Gamma_{n+2}(B)$ corresponds to the subgroup 
\[T_{n+2}^\Gamma\cap K=[E,T_{n+1}^\Gamma\cap K][T_{n+1}^\Gamma\cap K,K].\]
By induction, $T_{n+1}^\Gamma\cap K\leq \gamma_{n+1}(G)$, therefore,
\[T_{n+2}^\Gamma\cap K\leq [E,\gamma_{n+1}(G)][\gamma_{n+1}(G),K]\leq \gamma_{n+2}(G).\] 
Hence, $T_{n+2}^\Gamma\cap K\leq \gamma_{n+2}(G)\cap K$. On the other hand, by Theorem~\ref{teo-central-ser-3fact}, $\gamma_{n+2}(G)\cap K\leq T_{n+2}^\Gamma\cap K$. The result follows.\qedhere
\end{proof}

\section{Proof of Theorem~\ref{teo-grun}}

We prove a stronger version of \cite[Theorem 1.6]{Tsang26-jmathsocjapan}.
\begin{prop}\label{prop-Z2-commut}
Let $B$ be a skew brace, then
\begin{gather*}
\centre_2(B)\ast[B,B]^B=0,\\
[\centre_2(B),[B,B]^B]_+=0,\\
[B,B]_\bullet\ast \centre_2(B)=0.
\end{gather*}
\end{prop}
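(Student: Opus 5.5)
We plan to work in the large trifactorised group $(G,K,H,E)$ associated with $B$ and combine Theorem~\ref{teo-central-ser-3fact} with the three-subgroup lemma. By Theorem~\ref{teo-central-ser-3fact}, $T_2^Z\leq \centre_2(G)$. By Lemma~\ref{lemma-deriv-3fact}, $T_2^\Gamma\cap K=G'\cap K=K'[E,K]$, and $T_2^\Gamma\cap E=E'([E,K]H\cap E)$. The standard group-theoretic fact $[\centre_2(G),G']=1$ then gives $[T_2^Z,G']=1$. This fact follows from the three-subgroup lemma: $[G,G,\centre_2(G)]\leq\centre(G)$ and $[G,\centre_2(G),G]=[\centre_2(G),G,G]=1$.

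The second and third identities should follow quickly from this.

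\begin{itemize}
\item For the second identity, the additive group of $[\centre_2(B),[B,B]^B]_+$ corresponds to $[T_2^Z\cap K,\,T_2^\Gamma\cap K]=[T_2^Z\cap K,\,G'\cap K]$. This is trivial because $T_2^Z\cap K\leq\centre_2(G)$ and $G'\cap K\leq G'$.
\item For the third identity, Proposition~\ref{prop-add-gr-deriv} shows that $[B,B]_\bullet\ast\centre_2(B)$ corresponds to $[E',\,T_2^Z\cap K]$. Since $E'\leq G'$, this is again trivial.
\end{itemize}

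The first identity is the main obstacle. By Proposition~\ref{prop-add-gr-deriv}, $\centre_2(B)\ast[B,B]^B$ corresponds to $[T_2^Z\cap E,\,T_2^\Gamma\cap K]$, and this vanishes by the same argument. Here one must check that $T_2^Z\cap E$ is exactly the image under $\eta\circ\delta^{-1}$ of the additive group $T_2^Z\cap K$, i.e.\ $(T_2^Z\cap K)H\cap E$. This is \cite[Lemma 5.3]{BallesterEstebanPerezAPerezC26-commmathstat-categoriesskewleftbraces} together with the fact that $T_2^Z$ is a trifactorised subgroup, so $T_2^Z=(T_2^Z\cap K)(T_2^Z\cap E)$. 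Then $T_2^Z\cap E\leq\centre_2(G)$ and $T_2^\Gamma\cap K\leq G'$, so the commutator is trivial and $\centre_2(B)\ast[B,B]^B=0$.

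A further subtle point is that $[B,B]^B$ need not be generated by the $*$-products and additive commutators on the multiplicative side. However, only its additive group enters $\centre_2(B)\ast[B,B]^B$, and that group is $K'[E,K]\leq G'$, so the argument above suffices. I would finally double-check the identification of $a\ast b$ with $[\eta(\delta^{-1}(a)),b]$ stated before Proposition~\ref{prop-add-gr-deriv}, which fixes the order of the arguments in each case.
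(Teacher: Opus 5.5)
Your proposal is correct and is essentially the paper's proof: in the large trifactorised group you identify the three additive groups with $[T_2^Z\cap E,K'[E,K]]$, $[T_2^Z\cap K,K'[E,K]]$ and $[E',T_2^Z\cap K]$ via Propositions~\ref{prop-sub-in-3fact} and~\ref{prop-add-gr-deriv}, then conclude using $T_2^Z\le\centre_2(G)$ (Theorem~\ref{teo-central-ser-3fact}) and $[\centre_2(G),G']=1$. The only difference is that you derive $[\centre_2(G),G']=1$ from the three-subgroup lemma instead of citing Huppert, Kapitel~III, Hauptsatz~2.11; the input that matters there is $[G,\centre_2(G),G]=[\centre_2(G),G,G]=1$, which forces $[G,G,\centre_2(G)]=1$, so your extra clause $[G,G,\centre_2(G)]\le\centre(G)$ is true but not needed.
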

\begin{proof}
Consider the large trifactorised group $(G,K,D,C)$ associated with $B$ and let $T_2^Z$ be the trifactorised subgroup associated with $\centre_2(B)$. By Proposition~\ref{prop-sub-in-3fact}, the additive group of $[B,B]^B$ corresponds to $K'[C,K]$.

By Proposition~\ref{prop-add-gr-deriv}, the subgroups associated with the additive groups of $\centre_2(B)\ast[B,B]^B$, $[\centre_2(B),[B,B]^B]_+$ and $[B,B]_\bullet\ast \centre_2(B)$ are
\[[T_2^Z\cap C,K'[C,K]],\quad [T_2^Z\cap K,K'[C,K]],\quad[C',T_2^Z\cap K],\]
 respectively. 
 
By Theorem~\ref{teo-central-ser-3fact}, $T_2^Z\subseteq \centre_2(G)$. Since $C'$ and $K'[C,K]$ are contained in $G'$, it follows that our three commutators are contained in $[\centre_2(G),G']=0$. (see \cite[Kapitel~III, Hauptsatz~2.11]{Huppert67}).\qedhere
\end{proof}

The following well-known identities, that can be easily checked, will be applied without further reference.
\begin{lemma}
  Let $B$ be a skew brace and let $a$, $b$, $c\in B$. Then:
  \begin{enumerate}
  \item $a*(b+c)=a*b+b+a*c-b$.
    
  \item $a*(-b)=-b-a*b+b$. If $b$ commutes with $a*b$ in $(B, {+})$, then $a*(-b)=-a*b$.
  \item $(ab)*c=a*(b*c)+b*c+a*c$.
  \end{enumerate}
\end{lemma}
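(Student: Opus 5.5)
Each identity is checked by expanding definitions in $(B,{+})$ and $(B,{\cdot})$, using only $a*b=-a+ab-b$, equivalently $ab=a+a*b+b$, the left distributive law $a(b+c)=ab-a+ac$, and the fact that $\lambda_a(x)=-a+ax$ is an automorphism of $(B,{+})$ with $\lambda_{ab}=\lambda_a\lambda_b$. The most efficient route is to write $a*x=\lambda_a(x)-x$, which follows directly from the definition, and to obtain each identity from the homomorphism properties of $\lambda$.

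For the first identity, I will compute
\[a*(b+c)=\lambda_a(b)+\lambda_a(c)-c-b=(\lambda_a(b)-b)+b+(\lambda_a(c)-c)-b,\]
which equals $a*b+b+a*c-b$ as required.

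For the second identity, set $c=-b$ in the first. Since $a*0=0$, this gives $0=a*b+b+a*(-b)-b$. Solving in $(B,{+})$ yields $a*(-b)=-b-a*b+b$. If $b$ commutes with $a*b$ in $(B,{+})$, conjugation by $b$ leaves $-a*b$ fixed, so $a*(-b)=-a*b$.

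For the third identity, I will use $\lambda_{ab}=\lambda_a\lambda_b$:
\[(ab)*c=\lambda_a(\lambda_b(c))-c=\lambda_a(b*c+c)-c=\lambda_a(b*c)+\lambda_a(c)-c.\]
Rewriting $\lambda_a(b*c)=a*(b*c)+b*c$ and $\lambda_a(c)-c=a*c$ gives $a*(b*c)+b*c+a*c$.

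None of these steps is a real obstacle. The only point requiring care is keeping the order of summands in the possibly non-abelian group $(B,{+})$, particularly in the second identity, where the conjugation by $b$ must be tracked.
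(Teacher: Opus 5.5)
Your verification is correct. Writing $a*x=\lambda_a(x)-x$ and using that $\lambda_a$ is an automorphism of $(B,{+})$ with $\lambda_{ab}=\lambda_a\lambda_b$ gives all three identities, and you keep the order of summands right in the non-abelian case. The paper gives no proof, only calling these identities well known and easily checked, so your direct computation is exactly the intended routine check.
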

\begin{prop}\label{prop-equi-grun}
Let $B$ be a perfect skew brace, then the following statements are equivalent:
\begin{enumerate}
\item $\centre_2(B)=\centre(B)$.\label{prop-equi-grun-1}
\item $(B\ast B)\ast \centre_2(B)=0$.\label{prop-equi-grun-2}
\end{enumerate}
\end{prop}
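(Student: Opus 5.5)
The implication \ref{prop-equi-grun-1}$\Rightarrow$\ref{prop-equi-grun-2} should be immediate. For \ref{prop-equi-grun-2}$\Rightarrow$\ref{prop-equi-grun-1}, the plan is to check, for an arbitrary $z\in\centre_2(B)$, the three conditions $z\in\ker\lambda$, $z\in\centre(B,+)$ and $z\in\Fix(B)$. This suffices because $\centre(B)=\Soc(B)\cap\Fix(B)$ and $\Soc(B)=\ker\lambda\cap\centre(B,+)$. The first two conditions will come from Proposition~\ref{prop-Z2-commut} together with perfectness, and the third from hypothesis~\ref{prop-equi-grun-2} combined with a subgroup argument.

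For \ref{prop-equi-grun-1}$\Rightarrow$\ref{prop-equi-grun-2}, suppose $\centre_2(B)=\centre(B)$ and let $z\in\centre(B)$. Then $z\in\Fix(B)$, so $\lambda_a(z)=z$, that is, $-a+az=z$, for every $a\in B$. Hence $a*z=-a+az-z=0$ for every $a\in B$, and in particular for every $a\in B*B$. Thus $(B*B)*\centre_2(B)=0$.

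For \ref{prop-equi-grun-2}$\Rightarrow$\ref{prop-equi-grun-1}, first use perfectness, $B=[B,B]^B$. By Proposition~\ref{prop-Z2-commut} we get $\centre_2(B)*B=\centre_2(B)*[B,B]^B=0$ and $[\centre_2(B),B]_+=[\centre_2(B),[B,B]^B]_+=0$. Now $z*b=0$ means $-z+zb-b=0$, i.e.\ $\lambda_z(b)=-z+zb=b$, for all $b\in B$, so $z\in\ker\lambda$. The second equality gives $z\in\centre(B,+)$. Hence $z\in\Soc(B)$.

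It remains to show $z\in\Fix(B)$, that is, $a*z=0$ for all $a\in B$. This is the main step. Since $z$ is fixed, consider the set $S=\{c\in B\mid c*z=0\}$. Because $c*z=0$ is equivalent to $\lambda_c(z)=z$, the set $S$ is the stabiliser of $z$ under the action $\lambda$ of $(B,\cdot)$, and hence a subgroup of $(B,\cdot)$. Hypothesis~\ref{prop-equi-grun-2} gives $B*B\subseteq S$. The third identity of Proposition~\ref{prop-Z2-commut}, $[B,B]_\bullet*\centre_2(B)=0$, gives $[B,B]_\bullet\subseteq S$. Using the decomposition $[B,B]^B=(B*B)[B,B]_\bullet$ recalled earlier, perfectness yields $B=(B*B)[B,B]_\bullet\subseteq S$. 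Therefore $z\in\Fix(B)$, so $z\in\Soc(B)\cap\Fix(B)=\centre(B)$, which shows $\centre_2(B)\subseteq\centre(B)$.

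The only delicate point is exactly this last step: passing from the vanishing of $*$ on the generating pieces $B*B$ and $[B,B]_\bullet$ to all of $B$. The stabiliser observation handles it, and it needs no commutativity assumption on $z$ beyond the fixed choice of $z$.
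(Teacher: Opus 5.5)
Your proof is correct and follows the paper's argument. Perfectness together with Proposition~\ref{prop-Z2-commut} puts $\centre_2(B)$ inside $\Soc(B)$, and then $B=(B\ast B)[B,B]_\bullet$, combined with hypothesis (2) and $[B,B]_\bullet\ast\centre_2(B)=0$, gives $\centre_2(B)\subseteq\Fix(B)$. The only cosmetic difference is in that last step: you observe that the $\lambda$-stabiliser of $z$ is a subgroup of $(B,\cdot)$, whereas the paper expands $(ac)\ast z=a\ast(c\ast z)+c\ast z+a\ast z$ directly.
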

\begin{proof}
\ref{prop-equi-grun-1} implies \ref{prop-equi-grun-2}. It follows from the fact that $\centre_2(B)=\centre(B)\subseteq \Fix(B)$.

\ref{prop-equi-grun-2} implies \ref{prop-equi-grun-1}. We want to prove that $\centre_2(B)\subseteq \centre(B)$. Since $B$ is perfect, Proposition~\ref{prop-Z2-commut} yields that $\centre_2(B)\ast B=[\centre_2(B),B]_+=0$. Hence, $\centre_2(B)\subseteq \centre(B,{+})\cap\ker\lambda=\Soc(B)$. Thus, we only need to prove that $\centre_2(B)$ is contained in $\Fix(B)$. By Proposition~\ref{prop-Z2-commut}, $[B,B]_\bullet\ast \centre_2(B)=0$. Recall that $B=[B,B]^B=(B\ast B)[B,B]_\bullet$. Let $b\in B$ and $z\in \centre_2(B)$. There exist $a\in B\ast B$ and $c\in [B,B]_\bullet$ such that $b=ac$. Then 
\[b\ast z=(ac)\ast z=a\ast (c\ast z)+c\ast z+a\ast z=0.\]
 Therefore, $B\ast \centre_2(B)=0$, namely, $\centre_2(B)\subseteq\Fix(B)$. \qedhere
\end{proof}

\begin{prop}[{\cite[Section~2.3]{Tsang26-jmathsocjapan}}]\label{prop-tsang-2.3}
Let $B$ be a skew brace. The following statements are equivalent:
\begin{enumerate}
\item $(B\ast B)\ast \centre_2(B)=0$.
\item The multiplicative conjugation by every element of $\centre_2(B)$ corresponds to an automorphism of $(B,{+})$.
\end{enumerate}
\end{prop}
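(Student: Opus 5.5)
The plan is to compute the conjugation $x\mapsto zxz^{-1}$, for $z\in\centre_2(B)$, explicitly in terms of the star operation. The goal is to show that its additivity is equivalent to the map $x\mapsto x\ast z$ being a homomorphism of $(B,{+})$, and then to identify this with the vanishing of $(B\ast B)\ast z$.

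\emph{Centrality facts for $z$.} Fix $z\in\centre_2(B)$. Since $\centre_2(B)/\centre(B)=\centre(B/\centre(B))$, the following elements lie in $\centre(B)=\Soc(B)\cap\Fix(B)$ for all $x\in B$: $z\ast x$, $x\ast z$, $[z,x]_+$ and $x^{-1}zxz^{-1}$. From these I will derive two facts.
\begin{enumerate}
\item The map $\varphi_z\colon x\mapsto x\ast z$ is a homomorphism from $(B,{\cdot})$ to the abelian group $(\centre(B),{+})$. Indeed, $(ab)\ast z=a\ast(b\ast z)+b\ast z+a\ast z$, and $a\ast(b\ast z)=0$ because $b\ast z\in\Fix(B)$.
\item The map $x\mapsto z\ast x$ is additive, because $z\ast(x+y)=z\ast x+x+z\ast y-x$ and $z\ast y$ is additively central.
\end{enumerate}

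\emph{A formula for conjugation.} Using $ab=a+\lambda_a(b)$ and $\lambda_z(z^{-1})=-z$, I get $zxz^{-1}=z+\lambda_z(x)-\lambda_{zxz^{-1}}(z)$, which rewrites as
\[zxz^{-1}=z\ast x+(z+x-z)-(zxz^{-1})\ast z.\]
Write $zxz^{-1}=xw$ with $w\in\centre(B)\subseteq\Soc(B)$. Then $w\ast z=0$, and by the first fact $(xw)\ast z=x\ast z$. Hence
\[zxz^{-1}=z\ast x+(z+x-z)-x\ast z.\]
The first two summands are additive in $x$ by the second fact, and all star terms are additively central. It follows that conjugation by $z$ is an automorphism of $(B,{+})$ if and only if $(x+y)\ast z=x\ast z+y\ast z$ for all $x,y\in B$.

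\emph{Reduction to $B\ast B$.} Write $x+y=x\,\lambda_x^{-1}(y)$. By the first fact, $(x+y)\ast z=x\ast z+\lambda_x^{-1}(y)\ast z$. So additivity is equivalent to $\varphi_z\bigl(y^{-1}\lambda_c(y)\bigr)=0$ for all $c,y\in B$. Since $y\cdot\lambda_y^{-1}(c\ast y)=y+c\ast y=\lambda_c(y)$, we have $y^{-1}\lambda_c(y)=\lambda_y^{-1}(c\ast y)$. Because $B\ast B$ is a left ideal, these elements lie in $B\ast B$, which gives the implication (1)$\Rightarrow$(2).

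For (2)$\Rightarrow$(1), I must show that the elements $\lambda_y^{-1}(c\ast y)$ generate the multiplicative group of $B\ast B$. Then $\ker\varphi_z\supseteq B\ast B$, and letting $z$ range over $\centre_2(B)$ gives $(B\ast B)\ast\centre_2(B)=0$. This generation step is the main obstacle.

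My first attempt at it is as follows. Let $S$ be the subgroup of $(B,{\cdot})$ generated by these elements. The set $\{\lambda_y^{-1}(c\ast y)\}$ is invariant under each $\lambda_a$, because $\lambda_a(c\ast y)=(aca^{-1})\ast\lambda_a(y)$ up to the relabelling of $y$; I will check this identity carefully. Hence $S$ is $\lambda$-invariant. A $\lambda$-invariant multiplicative subgroup is a left ideal, since $s\cdot\lambda_s^{-1}(t)=s+t$ shows it is also closed under addition. This $S$ contains $c\ast y=\lambda_y(\lambda_y^{-1}(c\ast y))$ for all $c,y$, and therefore $S\supseteq B\ast B$.

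If that invariance identity fails, the fallback is to work directly with $\ker\varphi_z$. It is a multiplicative subgroup; I would show it is closed under the relevant $\lambda$-twists and use the same closure argument to conclude that it contains every $c\ast y$.
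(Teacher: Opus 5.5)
The paper gives no proof of this statement (it only cites Tsang), so I judge your argument on its own terms. The first direction is fine: the centrality facts, the fact that $\varphi_z$ is a homomorphism $(B,\cdot)\to(\centre(B),+)$, the formula $zxz^{-1}=z\ast x+(z+x-z)-x\ast z$, and the reduction of (2) to $\varphi_z(\lambda_y^{-1}(c\ast y))=0$ all check out, so (1)$\Rightarrow$(2) is proved.

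The gap is in (2)$\Rightarrow$(1), which you leave unproved. The $\lambda$-invariance of $X=\{\lambda_y^{-1}(c\ast y)\}$ is in fact true, but the reason you give does not establish it. We have $\lambda_a\lambda_y^{-1}(c\ast y)=\lambda_{ay^{-1}}(c\ast y)$, and your identity only turns this into some star; it is not visibly of the form $\lambda_w^{-1}(d\ast w)$. What is missing is $\lambda_{y^{-1}}(y)=-y^{-1}$. This gives $\lambda_y^{-1}(c\ast y)=(y^{-1}cy)\ast(-y^{-1})$, so $X$ is exactly the set of all stars. Also, passing from a $\lambda$-invariant generating set to a $\lambda$-invariant multiplicative subgroup $S$ is not automatic, because $\lambda_a$ is not multiplicative. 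Since $\lambda_a(s^{-1})=-\lambda_{as^{-1}}(s)$, you additionally need $-X\subseteq S$. This holds because $-(c\ast y)=\lambda_c(c^{-1}\ast y)$. With these two observations your route closes, but as written the key step is asserted, hedged, and not proved.

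More importantly, no generation statement is needed. Under (2) you have shown that $\varphi_z\colon x\mapsto x\ast z$ is additive, and your first fact says it is multiplicative, both into the abelian group $(\centre(B),+)$. Hence, for all $c,y\in B$,
\[\varphi_z(c\ast y)=\varphi_z(-c+cy-y)=-\varphi_z(c)+\varphi_z(c)+\varphi_z(y)-\varphi_z(y)=0,\]
and additivity then gives $\varphi_z(B\ast B)=0$.

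Your fallback also works, in one line. We have $\varphi_z(\lambda_a(x))=\varphi_z(x)+\varphi_z(x^{-1}\lambda_a(x))=\varphi_z(x)$. So $\ker\varphi_z$ is a $\lambda$-invariant multiplicative subgroup, hence a left ideal, and it contains every $c\ast y=\lambda_y(\lambda_y^{-1}(c\ast y))$.
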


\begin{proof}[{Proof of Theorem~\ref{teo-grun}}]
It follows directly from Propositions~\ref{prop-equi-grun} and \ref{prop-tsang-2.3}. Furthermore, in a two-sided skew brace the inner automorphisms of $(B,{\cdot})$ are automorphisms of $(B,{+})$ (see \cite[Lemma~4.1]{Nasybullov19}).\qedhere
\end{proof}

\section{Proof of Theorem~\ref{teo-baer}}


\begin{proof}[{Proof of Theorem~\ref{teo-baer}}]
Consider $(G,K,D,C)$ the large trifactorised group associated with $B$ and let $T_n^Z$ and $T_n^\Gamma$ be the trifactorised subgroups associated with $\centre_n(B)$ and $\Gamma_n(B)$, respectively.

Since $B/{\centre_n(B)}$ is finite, $K/(T_n^Z\cap K)$ and $C/(T_n^Z\cap C)$ are finite, therefore, by \cite[Lemma~1.2.5]{AmbergFranciosiDeGiovanni92}, $G/T_n^Z$ is finite. By Theorem~\ref{teo-central-ser-3fact}, $T_n^Z\leq \centre_n(G)$, hence, $G/{\centre_n(G)}$ is finite. Applying Baer's theorem, we conclude that $\gamma_{n+1}(G)$ is finite.

Applying Proposition~\ref{prop-des-cent-cap-K-eq}, we have that $T_{n+1}^\Gamma\cap K=\gamma_{n+1}(G)\cap K\leq\gamma_{n+1}(G)$. Therefore, $T_{n+1}^\Gamma\cap K$ is finite, which means that $\Gamma_{n+1}(B)$ is finite.\qedhere
\end{proof}

\section{Proof of Theorem~\ref{teo-schur-bound}}
\begin{proof}[{Proof of Theorem~\ref{teo-schur-bound}}]
Consider $(G,K,H,E)$ the small trifactorised group associated with $B$. Let $T_1^Z$ be the trifactorised subgroup associated with $\centre(G)$. Consider $C=(B,\cdot)$ and $C_1^Z=(\centre(B),\cdot)$.

We have that $K/(T_1^Z\cap K)\cong (B,+)/(\centre(B),+)$, therefore, $|K/(T_1^Z\cap K)|=t$. On the other hand, $E\cong C/{\ker\lambda}$ and $T_1^Z\cap E\cong C_1^Z\ker\lambda/{\ker\lambda}$ via the epimorphism $\eta=\lambda\colon C\longrightarrow E$. Therefore,
\[E/(T_1^Z\cap E)\cong C/C_1^Z\ker\lambda\cong (C/C_1^Z)/(C_1^Z\ker\lambda/C_1^Z).\]
Since $C/C_1^Z\cong (B,\cdot)/(\centre(B),\cdot)$, it is finite. Moreover, 
\[C_1^Z\ker\lambda/C_1^Z\cong \ker\lambda/(\ker\lambda\cap C_1^Z)\cong \ker\lambda/(\ker\lambda\cap (\centre(B),\cdot)).\]
Therefore, $|E/(T_1^Z\cap E)|=t/l$, where $l=|{\ker\lambda}:{\ker\lambda\cap \centre(B)}|$. Applying \cite[Lemma~1.2.5]{AmbergFranciosiDeGiovanni92}, we have that $|G:T_1^Z|\leq t^2/l$. By Theorem~\ref{teo-central-ser-3fact}, $T_1^Z\leq \centre(G)$, therefore, $|G:\centre(G)|\leq t^2/l$. Hence, by Theorem \ref{teo-schur-bound-weig}, $|G'|\leq\omega(t^2/l)$.

By Proposition~\ref{prop-sub-in-3fact}, the additive subgroup of $[B,B]^B$ corresponds to the subgroup $K'[E,K]$ of $G$. Hence, $|[B,B]^B|=|K'[E,K]|=|G'|/|E'|\leq \omega(t^2/l)/|E'|$. Finally, recall that $E\cong (B,\cdot)/{\ker\lambda}$.\qedhere
\end{proof}

\section{Proof of Theorem~\ref{teo-baer-bound}}

\begin{lemma}\label{lemma-add-power-star}
Let $B$ be a skew brace of abelian type, then $z(a\ast b)=a\ast (zb)$ for all $a,b\in B$ and $z\in\Z$.
\end{lemma}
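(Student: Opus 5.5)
The identity follows from the fact that, for fixed $a$, the map $b\mapsto a\ast b$ is an endomorphism of $(B,{+})$ when $(B,{+})$ is abelian. From the identity $a*(b+c)=a*b+b+a*c-b$ and commutativity of $+$ we get $a*(b+c)=a*b+a*c$. Equivalently, $a\ast b=\lambda_a(b)-b$, and this is a difference of two endomorphisms of the abelian group $(B,{+})$, namely $\lambda_a$ and the identity, so it is additive in $b$.

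We then handle the cases of $z$ separately. For $z=0$ both sides vanish, since $a\ast 0=-a+a\cdot 0-0=-a+a=0$ (here $a\cdot 0=a$ because $0$ is also the multiplicative identity of the skew brace). For $z>0$ we induct on $z$, using additivity:
\[a\ast((z+1)b)=a\ast(zb+b)=a\ast(zb)+a\ast b=z(a\ast b)+a\ast b=(z+1)(a\ast b).\]
For $z<0$ we apply the identity $a*(-c)=-c-a*c+c$ with $c=|z|b$. In the abelian case this reads $a*(-c)=-(a*c)$, so $a\ast(zb)=-(|z|(a\ast b))=z(a\ast b)$.

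Two points need care. First, the notation $zb$ means the $z$-th multiple of $b$ in $(B,{+})$, not a product in $(B,{\cdot})$; this interpretation is what the statement requires. Second, one must check that $0$ is the common neutral element of both group operations, which gives the base case. Neither step is a real obstacle; the whole argument rests on the additivity of $b\mapsto a\ast b$.
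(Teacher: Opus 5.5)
Your proof is correct and follows essentially the same route as the paper: induction on $z\geq 0$ using $a\ast(b+c)=a\ast b+b+a\ast c-b$ together with commutativity of $(B,{+})$, and the negative case via $a\ast(-c)=-(a\ast c)$ applied to $c=|z|b$. Your verification that $a\ast 0=0$, using that $0$ is the common identity of both operations, is exactly the base case the paper relies on.
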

\begin{proof}
  If $z\ge 0$, the result follows by induction on $z$, since $a\ast (0b)=a\ast 0=0=0(a\ast b)$ and, for $z\ge 0$, if $a\ast (zb)=z(a\ast b)$, then  $a\ast ((z+1)b)=a\ast(zb+b)=a\ast (zb)+b+a\ast b-b=z(a\ast b)+a\ast b=(z+1)(a\ast b)$. If $z<0$, then $a\ast (zb)=-(a\ast (-(zb)))=-(a\ast ((-z)b))=-(-z)(a\ast b)=z(a\ast b)$.\qedhere
\end{proof}


We present a proof of the following proposition using trifactorised groups, but it also follows from \cite[Theorem~6.3]{Tsang25-commmath}.
\begin{prop}\label{prop-cent-left-ser-star-0}
Let $B$ be a skew brace, then
\[\centre_n(B)\ast B^n=0,\;n\geq 1.\]
\end{prop}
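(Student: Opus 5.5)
We work in the large trifactorised group $(G,K,H,C)$ associated with $B$. Let $T_n^Z$ be the trifactorised subgroup associated with $\centre_n(B)$, and let $L_n\leq K$ be the subgroup corresponding to the additive group of the left ideal $B^n$. By Proposition~\ref{prop-add-gr-deriv}, the additive group of $B^n=B\ast B^{n-1}$ corresponds to $L_n=[C,L_{n-1}]$, with $L_1=K$. Hence, by induction on $n$,
\[L_n=[C,[C,\dots,[C,K]\dots]]\qquad(n-1 \text{ copies of } C),\]
and in particular $L_n\leq \gamma_n(G)$. Again by Proposition~\ref{prop-add-gr-deriv}, the additive group of $\centre_n(B)\ast B^n$ corresponds to $[T_n^Z\cap C,L_n]$. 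By Theorem~\ref{teo-central-ser-3fact}, $T_n^Z\leq \centre_n(G)$, so
\[[T_n^Z\cap C,L_n]\leq[\centre_n(G),\gamma_n(G)].\]

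The difficulty is that the group-theoretic fact available here is $[\centre_n(G),\gamma_n(G)]\neq 1$ in general; we only know $[\gamma_i(G),\centre_j(G)]\leq \centre_{j-i}(G)$, so $[\centre_n(G),\gamma_n(G)]\leq\centre_0(G)=1$ does hold. Indeed, the standard result (Huppert, Kapitel~III, Hauptsatz~2.11, already used in Proposition~\ref{prop-Z2-commut} for $n=2$) states exactly $[\gamma_i(G),\centre_j(G)]\leq\centre_{j-i}(G)$ for $j\geq i$. With $i=j=n$ this gives $[\centre_n(G),\gamma_n(G)]=1$, so $\centre_n(B)\ast B^n=0$.

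So the plan is: first, identify $B^n$ inside $G$ as an iterated commutator with $C$, noting that the Proposition~\ref{prop-add-gr-deriv} correspondence applies because each $B^{n-1}$ is a left ideal, hence a subbrace. Second, embed it in $\gamma_n(G)$. Third, use $T_n^Z\leq\centre_n(G)$ from Theorem~\ref{teo-central-ser-3fact}. Finally, apply the commutator identity above. The main point to check carefully is the first step: that the subgroup $L_{n-1}H\cap C$ appearing in Proposition~\ref{prop-add-gr-deriv} is replaced by $C$ itself when the left factor is $B$. This holds since $KH\cap C=C$.
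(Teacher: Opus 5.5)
Your proof is correct and follows the paper's argument: you identify $B^n$ with the iterated commutator $[C,[\dots,[C,K]\dots]]\le\gamma_n(G)$ in the large trifactorised group, use $T_n^Z\le\centre_n(G)$, and conclude from $[\centre_n(G),\gamma_n(G)]=1$ (Huppert, Kapitel~III, Hauptsatz~2.11); your count of $n-1$ copies of $C$ is in fact the right one. You should delete the self-contradictory sentence asserting that $[\centre_n(G),\gamma_n(G)]\neq 1$ in general, since the very next clause correctly shows it is trivial.
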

\begin{proof}
Consider $(G,K,D,C)$ the large trifactorised group associated with $B$ and let $T_n^Z$ be the trifactorised subgroup associated with $\centre_n(B)$. By Proposition~\ref{prop-add-gr-deriv}, the additive group associated with $B^n$ is $L_n^\ast=[C,[\cdots [C,K]\cdots]]$, where $C$ appears $n$ times. By Proposition~\ref{prop-add-gr-deriv}, the additive subgroup associated with $\centre_n(B)\ast B^n$ is $[T_n^Z\cap E,L_n^\ast]$. Note that $L_n^\ast$ is contained in $\gamma_n(G)$ and, by Theorem~\ref{teo-central-ser-3fact}, $T_n^Z$ is contained in $\centre_n(G)$, hence, $[T_n^Z\cap E,L_n^\ast]\subseteq[\centre_n(G),\gamma_n(G)]=0$ (see \cite[Kapitel~III, Hauptsatz~2.11]{Huppert67}).\qedhere
\end{proof}

\begin{teo}\label{teo-left-ser-baer-ab-typ}
  Let $B$ be a skew brace of abelian type such that $|B/{\centre_n(B)}|=t$, then $B^{n+1}$ is finite and $|B^{n+1}|\leq {\omega(t^2)}^{t^{n-1}}$.
\end{teo}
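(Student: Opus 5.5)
We argue by induction on $n$. The exponent $t^{n-1}$ indicates the shape of the argument. Since $B^{n+1}=B\ast B^n$ and $(B,{+})$ is abelian, $B^{n+1}$ is generated additively by the elements $b\ast x$ with $b\in B$ and $x\in B^n$. Proposition~\ref{prop-cent-left-ser-star-0} gives $\centre_n(B)\ast B^n=0$. We want to use this to reduce the generating set to finitely many ``pieces'' indexed by a transversal of $\centre_n(B)$, each piece of bounded order.

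The base case $n=1$ is Theorem~\ref{teo-schur-bound} in abelian type. There $[B,B]^B=B\ast B+[B,B]_+=B\ast B=B^2$ because $(B,{+})$ is abelian, and the bound $\omega(t^2)$ follows since $\omega$ is increasing and the denominator is at least $1$. This is exactly item~1 of Theorem~\ref{teo-baer-bound}.

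For the inductive step, suppose $|B/\centre_n(B)|=t$ with $n\geq2$. Put $\bar B=B/\centre(B)$. Then $\centre_{n-1}(\bar B)=\centre_n(B)/\centre(B)$, so $|\bar B/\centre_{n-1}(\bar B)|=t$. By induction, $\bar B^{\,n}=B^n\centre(B)/\centre(B)$ is finite of order at most $\omega(t^2)^{t^{n-2}}$.

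Next we describe $B^{n+1}$ using $a\ast(x+y)=a\ast x+a\ast y$, which holds in abelian type, together with $(ab)\ast c=a\ast(b\ast c)+b\ast c+a\ast c$. For $x\in B^n$ and $z\in\centre(B)\cap B^n$ we have $a\ast z=0$ because $\centre(B)\subseteq\Fix(B)$. Hence $a\ast x$ depends only on the class of $x$ modulo $\centre(B)$. Similarly, writing $a=a_0w$ with $w\in\centre_n(B)$ and $a_0$ in a transversal $R$ of size $t$, we get $(a_0w)\ast x=a_0\ast(w\ast x)+w\ast x+a_0\ast x=a_0\ast x$, since $w\ast x\in\centre_n(B)\ast B^n=0$. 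Therefore
\[
B^{n+1}=\sum_{a\in R}\,a\ast B^n,
\]
and for each $a$ the map $x\mapsto a\ast x$ is a homomorphism $(B^n,{+})\to(B,{+})$ that factors through $\bar B^{\,n}$. So each summand has order at most $|\bar B^{\,n}|\le\omega(t^2)^{t^{n-2}}$. Summing $t$ such subgroups in an abelian group gives
\[
|B^{n+1}|\le\bigl(\omega(t^2)^{t^{n-2}}\bigr)^{t}=\omega(t^2)^{t^{n-1}}.
\]

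The step I expect to be delicate is the reduction of $a$ modulo $\centre_n(B)$. The identity $(ab)\ast c$ requires the factor in $\centre_n(B)$ to be placed correctly. I would write $a=wa_0$, so that $(wa_0)\ast x=w\ast(a_0\ast x)+a_0\ast x+w\ast x$, and then I need $a_0\ast x\in B^n$ so that $w\ast(a_0\ast x)=0$. This holds because $B^{n+1}\subseteq B^n$, as $B^n$ is a left ideal. For this reason I would use the form $wa_0$ rather than $a_0w$.

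The other point to check is the additivity of $a\ast(-)$ in abelian type; Lemma~\ref{lemma-add-power-star} and the identity $a\ast(b+c)=a\ast b+b+a\ast c-b$ give it directly. One also has to confirm that $B^n/(B^n\cap\centre(B))$ is the left-series term of $\bar B$, which is routine since images of star products are star products of images.
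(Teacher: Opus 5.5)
Your proposal is correct and is essentially the paper's proof. Both reduce $b\ast x$ to $u\ast x$ with $u$ in a transversal of $\centre_n(B)$ via Proposition~\ref{prop-cent-left-ser-star-0}, observe that each $x\mapsto u\ast x$ is an endomorphism of $(B^n,{+})$ killing $B^n\cap\centre(B)$, and bound $|B^{n+1}|$ by $t$ factors of $|(B/\centre(B))^n|\le\omega(t^2)^{t^{n-2}}$.

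Two small remarks. First, your worry about the order of the factors is unfounded: with $b=a_0w$ one already has $a_0\ast(w\ast x)=a_0\ast 0=0$, and this is exactly how the paper writes $b=uz$. Second, $\omega$ is not monotone in general (e.g.\ $\omega(6)>\omega(9)$). It is, however, increasing among integers with the same prime divisors, and that suffices for passing from Theorem~\ref{teo-schur-bound} to $\omega(t^2)$: the index $|\ker\lambda:\ker\lambda\cap\centre(B)|$ divides $t$, so both arguments of $\omega$ have the same prime divisors as $t$.
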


\begin{proof}
By Theorem~\ref{teo-schur-bound-weig}, the result is true for $n=1$, so let us suppose that $n\ge 2$ and that it is true for $n-1$. Then
\[B^{n+1}=B\ast B^n=\Big\{\sum_{1\leq i\leq m}z_i(b_i\ast x_i)\;\Big|\; z_i\in\Z,\; b_i\in B,\; x_i\in B^n\Big\}.\]
By Lemma~\ref{lemma-add-power-star}, $z_i(b_i\ast x_i)=b_i\ast (z_ix_i)$. Therefore, 
\[B^{n+1}=\Big\{\sum_{1\leq i\leq m}b_i\ast x_i\;\Big|\; b_i\in B,\; x_i\in B^n\Big\}.\]
Let $T$ be a transversal of $(\centre_n(B),{+})$ in $(B,{+})$. Consider $b\in B$, then there exists $u\in T$ such that $b\in u+\centre_n(B)=u\centre_n(B)$ so $b=uz$ for some $z\in \centre_n(B)$. Let $x\in B^n$, then 
\[b\ast x=(uz)\ast x=u\ast (z\ast x)+z\ast x+u\ast x.\]
By Proposition~\ref{prop-cent-left-ser-star-0}, $z\ast x\in \centre_n(B)\ast B^n=0$. Therefore, $b\ast x=u\ast x$. Hence, \[B^{n+1}=\Bigl\{\sum_{1\leq i\leq m}u_i\ast x_i\;\Bigm|\; u_i\in T,\; x_i\in B^n\Bigr\}=\Bigl\{\sum_{u\in T}u\ast y_u\;\Bigm|\; y_u\in B^n\Bigr\}.\]
The last equality follows, since $T$ is finite and $u\ast x+u\ast y=u\ast(x+y)$ for all $u\in T$ and $x$, $y\in B^n$.

Since  $|(B/{\centre(B)})/{\centre_{n-1}(B/{\centre(B)})}|=|B/{\centre_n(B)}|=t$, by induction, it follows that $(B^{n}+{\centre(B)})/{\centre(B)}=(B/{\centre(B)})^n$ is finite and has order at most ${\omega(t^2)}^{t^{n-2}}$. 
Hence, $B^n/(B^n\cap \centre(B))$ is finite and has order at most  ${\omega(t^2)}^{t^{n-2}}$. 

Given $u\in T$, consider $l_u\colon B^n\longrightarrow B^n$ given by $l_u(x)=u\ast x$. Since $B$ is of abelian type, $l_u$ is a group endomorphism of $(B^n,{+})$ and we have that $B^n\cap \centre(B)\subseteq \ker l_u$ and
$l_u(B^n)\cong B^n/{\ker l_u}$. Since $B^n/(B^n\cap \centre(B))$ is finite and has order at most $t^{t^n}$, $l_u(B^n)$ also is finite and has order at most $t^{t^n}$. Since
\[B^{n+1}=\Big\{\sum_{t\in T}l_t(x_t)\;\Big|\; x_t\in B^n\Big\}\]
and $T$ has order $t$, it follows that, $B^{n+1}$ is finite and
\[|B^{n+1}|\leq \prod_{t\in T}|l_t(B^n)|\le {\Bigl({\omega(t^2)}^{t^{n-2}}\Bigr)}^t
  = {\omega(t^2)}^{t^{n-1}}.
  \qedhere\]

\end{proof}

\begin{prop}\label{prop-Gamma-left-serie}
If $B$ is a skew brace of abelian type such that $[B,B]^B$ is contained in $\ker\lambda$, then $\Gamma_n(B)=B^n$.
\end{prop}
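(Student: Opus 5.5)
We argue by induction on $n$, showing that $\Gamma_n(B)=B^n$ for every $n\ge 1$. The case $n=1$ is trivial because $\Gamma_1(B)=B=B^1$. For $n=2$ we have $\Gamma_2(B)=[B,B]^B=B\ast B+[B,B]_+$. Since $(B,{+})$ is abelian, $[B,B]_+=0$, so $\Gamma_2(B)=B\ast B=B^2$.

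For the inductive step, assume $n\ge 2$ and $\Gamma_n(B)=B^n$. We use the description
\[\Gamma_{n+1}(B)=\langle \Gamma_n(B)\ast B+B\ast\Gamma_n(B)+[\Gamma_n(B),B]_+\rangle^B.\]
The additive commutator term vanishes because $(B,{+})$ is abelian. Since $\Gamma_n(B)\subseteq\Gamma_2(B)=[B,B]^B\subseteq\ker\lambda$, every $x\in\Gamma_n(B)$ satisfies $x\ast b=\lambda_x(b)-b=0$, so $\Gamma_n(B)\ast B=0$. The same computation as in Proposition~\ref{prop-des-cent-cap-K-eq} therefore gives
\[\Gamma_{n+1}(B)=\langle B\ast\Gamma_n(B)\rangle^B=\langle B\ast B^n\rangle^B=\langle B^{n+1}\rangle^B.\]

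It remains to show that $B^{n+1}$ is already an ideal of $B$, which is the main point of the proof. First, $B^{n+1}$ is a left ideal, as recalled in Section~2. Second, its additive group is normal in $(B,{+})$, since $(B,{+})$ is abelian; hence $B^{n+1}$ is a strong left ideal. Third, we must check that it is a right ideal, that is, $B^{n+1}\ast B\subseteq B^{n+1}$. We have $B^{n+1}\subseteq B^2=B\ast B\subseteq[B,B]^B\subseteq\ker\lambda$, so $B^{n+1}\ast B=0$, which is trivially contained in $B^{n+1}$. Thus $B^{n+1}$ is an ideal, and so $\langle B^{n+1}\rangle^B=B^{n+1}$.

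Combining these, $\Gamma_{n+1}(B)=B^{n+1}$, which completes the induction. The only delicate point is the step where we pass from $\langle B\ast\Gamma_n(B)\rangle^B$ to $B\ast\Gamma_n(B)$. This requires knowing that each $B^k$ is a left ideal, that normality is automatic in abelian type, and that the right-ideal condition holds because $B^k\subseteq\ker\lambda$ for $k\ge 2$.
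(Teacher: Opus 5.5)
Your proof is correct and takes essentially the same route as the paper: induct using the generator description of $\Gamma_{n+1}(B)$, observe that $\Gamma_n(B)\ast B=0$ because $\Gamma_n(B)\subseteq\ker\lambda$ and that the additive commutator vanishes in abelian type, and conclude by noting that $B^{n+1}$ is already an ideal. Treating $n=2$ separately is a small improvement, because the paper's inclusion $\Gamma_{n-1}(B)\subseteq[B,B]^B$ fails literally when $n-1=1$; the paper's conclusion still holds there, since $\Gamma_1(B)\ast B=B\ast B$ is an ideal.
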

\begin{proof}
For $n=1$, $\Gamma_1(B)=B=B^1$, hence we can suppose it is true for $n-1$.
By \cite[Theorem~3.3]{BallesterEstebanFerraraPerezCTrombetti25-pacificjm-cent-nilp},
\[\Gamma_n(B)=[\Gamma_{n-1}(B), B]^B=\langle \Gamma_{n-1}(B)\ast B+B\ast \Gamma_{n-1}(B)+[\Gamma_{n-1}(B),B]_+\rangle^B.\]
We have that $\Gamma_{n-1}(B)\subseteq [B,B]^B\subseteq\ker\lambda$. Furthermore, $(B,{+})$ is abelian, hence, $\Gamma_n(B)=\langle B\ast \Gamma_{n-1}(B)\rangle^B=\langle B\ast B^{n-1}\rangle^B=\langle B^n\rangle^B$. We know that $B^n$ is a left ideal of $B$. Since $(B,{+})$ is abelian, $B^n$ is normal in $(B, {+})$. Moreover, $B^n\subseteq [B,B]^B\subseteq \ker\lambda$, hence, $B^n\ast B=0\subseteq B^n$. In conclusion, $B^n$ is an ideal of $B$. Therefore, $\Gamma_n(B)=B^n$. \qedhere
\end{proof}

\begin{proof}[Proof of Theorem~\ref{teo-baer-bound}]
\begin{enumerate}
\item By Theorem~\ref{teo-schur-bound}, $[B,B]^B$ is finite and $|[B,B]^B|\leq \omega(t^2)$.
\item The result follows directly from Theorem~\ref{teo-left-ser-baer-ab-typ} and Proposition~\ref{prop-Gamma-left-serie}.\qedhere
\end{enumerate}
\end{proof}

\section*{Acknowledgements}
This work has been supported by the grant PID2024-159495NB-I00, funded by MICIU/AEI/10.13039/501100011033 and by ERDF/EU. The first, the second and the fourth authors are supported by the grant CIAICO/2023/007 from the
Conselleria d’Educaci\'o, Universitats i Ocupaci\'o, Generalitat Valenciana. The
third author is very grateful to the Conselleria d’Innovació, Universitats,
Ciència i Societat Digital of the Generalitat (Valencian Community, Spain)
and the Universitat de València for their financial support to host researchers
affected by the war in Ukraine in research centres of the Valencian Community. The fourth author has been funded by the Spanish Ministry of Science, Innovation, and Universities through the FPU predoctoral grant FPU24/01319.

\bibliographystyle{plain}
\bibliography{bibgroup}
\end{document}